\documentclass[11pt]{article}

\usepackage[a4paper,margin=1in]{geometry}
\usepackage[T1]{fontenc}
\usepackage[utf8]{inputenc}
\usepackage{lmodern}
\usepackage{microtype}
\usepackage{amsmath,amssymb,amsfonts,amsthm,mathtools}
\usepackage{bm}
\usepackage{authblk}
\usepackage{graphicx}
\usepackage{booktabs}
\usepackage{array}
\usepackage{multirow}
\usepackage{subcaption}
\usepackage{enumitem}
\usepackage{algorithm}
\usepackage{algpseudocode}
\usepackage{xcolor}
\usepackage{placeins}
\usepackage[backend=biber,style=numeric,sorting=none,maxbibnames=99]{biblatex}
\usepackage{hyperref}
\hypersetup{colorlinks=true, linkcolor=blue!60!black, citecolor=blue!60!black, urlcolor=blue!60!black}

\newcommand{\Hq}{\mathbb{H}}
\newcommand{\R}{\mathbb{R}}
\newcommand{\C}{\mathbb{C}}

\newcommand{\eps}{\varepsilon}
\newcommand{\pinv}{\dagger}

\DeclareMathOperator{\Sdet}{Sdet}

\DeclareMathOperator{\vol}{vol}
\DeclareMathOperator*{\argmax}{arg\,max}
\DeclareMathOperator*{\argmin}{arg\,min}

\newcommand{\maybeincludegraphics}[2][]{%
\IfFileExists{Figures/#2}{\includegraphics[#1]{Figures/#2}}{%
\IfFileExists{figs/#2}{\includegraphics[#1]{figs/#2}}{%
\fbox{\parbox{0.82\linewidth}{\centering Missing figure file: \texttt{\detokenize{#2}}.\\ Upload it in the \texttt{Figures/} folder.}}}}}

\newtheorem{theorem}{Theorem}
\newtheorem{lemma}{Lemma}
\newtheorem{proposition}{Proposition}
\newtheorem{corollary}{Corollary}
\theoremstyle{definition}
\newtheorem{definition}{Definition}
\theoremstyle{remark}
\newtheorem{remark}{Remark}

\title{Quaternion Maximum-Volume Submatrix Selection with Applications to Multichannel Imaging and Visual Data}

\author[1]{Vsevolod Kliushev\thanks{Email: \texttt{kadaverciant@gmail.com}}}
\author[2]{Junjun Pan\thanks{Email: \texttt{junjpan@hkbu.edu.hk}}}
\author[1]{Valentin Leplat\thanks{Email: \texttt{valentin.leplat@gmail.com}}}

\affil[1]{Institute of Data Science and Artificial Intelligence, Innopolis University}
\affil[2]{Department of Mathematics, Faculty of Science, Hong Kong Baptist University, Hong Kong, China}

\date{}

\begin{document}
\maketitle

\begin{abstract}
Low-rank approximation based on selected rows and columns is a useful alternative to singular value decompositions when the goal is to obtain an interpretable and compact representation of a matrix. A standard way to choose these rows and columns is the maximum-volume principle. It selects submatrices with large volume, which usually leads to stable interpolation coefficients and accurate CUR-type approximations. In this paper, we study this idea for quaternion matrices. This setting is natural for color images, three-dimensional motion data, and multi-channel signals, but it requires some care because quaternion multiplication is noncommutative. We define quaternion maximum-volume submatrix selection using quaternion singular values and the Study determinant. We then derive quaternion rank-one update formulas and use them to build two selection procedures: a greedy square-core method for row and column replacement, and a rectangular method that enlarges a selected row set until the interpolation coefficients are controlled. We prove that successful row and column swaps increase the quaternion volume of the selected square core when the exact quaternion inverse is used. We also connect the stopping criterion with quasi-dominance, prove an exact quaternion CUR identity in the full-rank case, and derive an interpolation stability bound for the associated CUR approximation. For the rectangular case, we derive an append-row pseudoinverse update and show how it leads to a natural right preconditioner for overdetermined quaternion least-squares problems. Finally, we illustrate the methods on three applications: quaternion CUR approximation of RGB images, RectMaxVol-based preconditioning for ill-conditioned quaternion least-squares systems, and row selection in quaternion motion-capture data. The experiments show that the proposed quaternion MaxVol and RectMaxVol methods provide stable and efficient selection routines.

\end{abstract}

\noindent\textbf{Keywords.} quaternion matrices, maximum volume submatrices, MaxVol, RectMaxVol, CUR approximation, quaternion least squares, motion capture

\section{Introduction}

Low-rank approximation is a basic tool in numerical linear algebra and data analysis. Given a large matrix, the goal is to replace it by a simpler object that keeps the main information. The singular value decomposition gives the best low-rank approximation in the spectral and Frobenius norms. However, it does not preserve the rows and columns of the original matrix, and it can be expensive when the matrix is large.

An alternative is to approximate a matrix using only selected rows and columns. This leads to CUR and cross approximation methods. If $A\in\C^{m\times n}$ and if $I$ and $J$ are row and column index sets, one writes
\[
        A \approx C U R, \qquad C=A[:,J], \quad R=A[I,:],
\]
where the small core matrix $U$ is usually built from the intersection submatrix $B=A[I,J]$. This type of approximation is attractive because it is interpolatory: it uses actual rows and columns of the input data. The quality of the approximation depends strongly on the selected submatrix $B$.

The maximum-volume principle is a classical way to choose such a submatrix. In the square case, it seeks a submatrix with large determinant in modulus. In the rectangular case, the volume is usually defined as the product of singular values. Large-volume submatrices tend to be well conditioned and to produce bounded interpolation coefficients. This is the main reason why MaxVol-type methods are useful in CUR decompositions, skeleton approximations, least-squares problems, and related selection tasks \cite{goreinov1997pseudo,goreinov2001maximal,goreinov2010find,mikhalev2018rectangular,allen2024maximal}.

In this paper, we study maximum-volume submatrix selection for quaternion matrices. Quaternions are useful when each data entry has several coupled components. For example, an RGB image can be encoded as a pure quaternion matrix by assigning the red, green and blue channels to the three imaginary units. Three-dimensional motion data can also be represented by pure quaternions. In such applications, processing the data directly in the quaternion domain may preserve the coupling between channels more naturally than applying a real-valued method independently to each component.

The quaternion setting is not a direct copy of the real or complex case. The main difficulty is noncommutativity. The order of factors in inverse formulas matters, and determinant-like quantities have to be chosen carefully. Moreover, when we update a selected core by replacing one row or one column, the usual Sherman--Morrison and Woodbury formulas must be written in an order that remains valid over $\Hq$.

Our goal is not to solve the globally optimal maximum-volume problem. This problem is already difficult in the real and complex cases. Instead, we focus on practical MaxVol-type algorithms which find locally dominant or near-dominant submatrices. This is the setting in which classical MaxVol methods are used in practice.

\subsection{Contribution}

The contributions of the paper are as follows.
\begin{enumerate}[label=(\roman*)]
    \item We formulate square and rectangular maximum-volume submatrix selection for quaternion matrices using quaternion singular values and the Study determinant.

    \item We derive quaternion rank-one update formulas for row and column replacements. We also derive an append-row pseudoinverse update for the RectMaxVol procedure.

    \item We adapt Greedy MaxVol and RectMaxVol to quaternion matrices. The algorithms are written so that the multiplication order is explicit.

    \item We prove that, in exact arithmetic and for a nonsingular square core, every successful row or column swap increases the quaternion volume of the selected core.

    \item We prove a simple interpolation stability bound for the quaternion CUR approximation associated with a selected core. This shows how the quasi-dominance condition controls the loss with respect to the best approximation using the selected columns.

    \item We show why RectMaxVol can be used as a right preconditioner for overdetermined quaternion least-squares problems. In particular, we derive a simple condition-number bound from the RectMaxVol coefficient control.

    \item We evaluate the methods on RGB image approximation, quaternion least-squares preconditioning, and motion-capture row selection.
\end{enumerate}

We also implemented quaternion adaptations of GMVA and adaptive Block DEIM. These methods are natural extensions of existing real-valued adaptive selection routines. However, in our current tests they did not give a clear advantage over the simpler Greedy MaxVol and RectMaxVol methods. For this reason, we keep the main paper focused on the two basic methods. The other adaptations are briefly discussed in Appendix~\ref{app:other_algorithms}.

\subsection{Related work}

MaxVol ideas were introduced in the context of pseudo-skeleton approximations and low-rank approximation by Goreinov, Tyrtyshnikov, Zamarashkin and coauthors \cite{goreinov1997pseudo,goreinov2001maximal,goreinov2010find}. These works showed that submatrices with large volume lead to stable interpolation and useful error bounds. The rectangular MaxVol method of Mikhalev and Oseledets \cite{mikhalev2018rectangular} extended this idea to tall submatrices and made it useful for least-squares preconditioning. More recent works studied structured matrices \cite{cortinovis2020maximum}, image compression and least-squares applications \cite{allen2024maximal}, and adaptive or block variants of submatrix selection \cite{wang2013improving,gidisu2025block}.

Quaternion matrix computations have also received attention. Basic facts on quaternion matrices and the quaternion singular value decomposition can be found in \cite{zhang1997quaternions,le2004singular}. Structure-preserving QR methods were studied in \cite{jia2018new}, and large-scale QSVD algorithms were developed in \cite{jia2019lanczos}. Randomized quaternion low-rank approximation has also been considered in \cite{liu2022randomized,ahmadi2025pass}. Iterative methods for quaternion pseudoinverses were recently studied in \cite{leplat2025iterative}. To the best of our knowledge, MaxVol and RectMaxVol selection methods have not been developed systematically for quaternion matrices.

Most of the MaxVol, RectMaxVol, DEIM, and CUR theory cited above is formulated over the real or complex field. Our contribution is not to replace this classical theory, but to extend its main local mechanisms to quaternion matrices in a form that is consistent with quaternion linear algebra. This requires some care. The usual determinant is not directly available over $\mathbb H$, scalar factors do not commute, and inverse-update formulas have to be written with the correct multiplication order. We therefore use quaternion singular values, the Study determinant, and the complex embedding as proof tools, while the selection procedures themselves remain quaternion row and column selection procedures. In this sense, several results below are quaternion analogues of known real/complex MaxVol facts, but their validity over \(\mathbb H\) has to be checked rather than assumed.
\color{black}

\subsection{Organization}

The paper is organized as follows. Section~\ref{sec:preliminaries} recalls the quaternion notation, the volume definition, and the submatrix selection problem. Section~\ref{sec:algorithms} presents the quaternion Greedy MaxVol and RectMaxVol algorithms. Section~\ref{sec:theory} gives the theoretical guarantees. Section~\ref{sec:experiments} presents the numerical experiments. Section~\ref{sec:conclusion} concludes the paper. Appendix~\ref{app:other_algorithms} briefly describes the additional quaternion adaptations which were tested but are not central to the paper.

\section{Quaternion preliminaries and problem setting}
\label{sec:preliminaries}

We recall only the notation needed in the paper. A quaternion has the form
\[
    q=q_0+q_1 i+q_2 j+q_3 k,
\]
where $q_0,q_1,q_2,q_3\in\R$ and
\[
    i^2=j^2=k^2=ijk=-1.
\]
The conjugate is $q^*=q_0-q_1 i-q_2 j-q_3 k$, and the modulus is
\[
    |q| = (q^*q)^{1/2} = \left(q_0^2+q_1^2+q_2^2+q_3^2\right)^{1/2}.
\]
If $q\neq 0$, then $q^{-1}=q^*/|q|^2$. This inverse is two-sided. For matrices over $\Hq$, we denote by $A^*$ the conjugate transpose. A matrix $Q$ is unitary if $Q^*Q=I$.

Any quaternion matrix $A\in\Hq^{m\times n}$ can be written as
\[
    A=A_0+A_1 i+A_2 j+A_3 k,
\]
where $A_0,A_1,A_2,A_3$ are real matrices. It can also be written as $A=X+Yj$, where $X,Y\in\C^{m\times n}$. We use the standard complex embedding
\begin{equation}
\label{eq:complex_embedding}
    \Phi(A)=
    \begin{bmatrix}
        X & Y \\
        -\overline{Y} & \overline{X}
    \end{bmatrix}
    \in \C^{2m\times 2n}.
\end{equation}
This embedding is injective and multiplicative, that is,
\[
    \Phi(AB)=\Phi(A)\Phi(B)
\]
whenever the product is defined.

The quaternion singular value decomposition states that any $A\in\Hq^{m\times n}$ admits a factorization
\[
    A=U\Sigma V^*,
\]
where $U$ and $V$ are quaternion unitary matrices and $\Sigma$ is real diagonal, with nonnegative diagonal entries. These diagonal entries are the quaternion singular values of $A$ \cite{zhang1997quaternions,le2004singular}.

\subsection{Quaternion volume}

For a tall matrix $A\in\Hq^{m\times n}$ with $m\ge n$, we define
\begin{equation}
\label{eq:rect_volume}
    \vol(A)=\prod_{i=1}^n \sigma_i(A),
\end{equation}
where $\sigma_i(A)$ are the quaternion singular values. In the square case, this definition is consistent with determinant-based definitions. If $B\in\Hq^{k\times k}$, we use the Study determinant
\[
    \Sdet(B)=|\det(\Phi(B))|,
\]
see \cite{aslaksen1996quaternionic}. Then
\begin{equation}
\label{eq:square_volume}
    \vol(B)=\sqrt{\Sdet(B)}.
\end{equation}
Equivalently, $\vol(B)$ is the product of the quaternion singular values of $B$. For Hermitian quaternion matrices, this is also related to the Moore determinant \cite{cohen2000quaternionic}. In particular, for a scalar quaternion $q$, one has
\[
    \vol(q)=|q|.
\]

This definition is chosen so that the volume remains a real nonnegative quantity and agrees with the product of quaternion singular values. In the square case, the Study determinant provides the determinant-like object needed for volume comparisons.

\subsection{Square and rectangular selection problems}

Let $A\in\Hq^{m\times n}$. In the square case, we select row and column index sets $I$ and $J$, with $|I|=|J|=k$, and form the core
\[
    B=A[I,J]\in\Hq^{k\times k}.
\]
The goal is to find a core with large volume. In practice, we seek a locally dominant core, not a globally optimal one.

Assume that $B$ is nonsingular. We define the two coefficient matrices
\begin{equation}
\label{eq:coeff_matrices}
    C=A[:,J]B^{-1}\in\Hq^{m\times k},
    \qquad
    C'=B^{-1}A[I,:]\in\Hq^{k\times n}.
\end{equation}
A square core is locally dominant if all admissible row and column replacement coefficients have modulus at most one. In numerical algorithms, this is relaxed to a tolerance $1+\eps$.

For RectMaxVol, the column set $J$ is fixed, with $|J|=k$, and the algorithm enlarges the row set $I$ until the interpolation coefficients are controlled. If $B=A[I,J]\in\Hq^{r\times k}$ with $r\ge k$ and full column rank, the coefficient matrix is
\[
    F=A[:,J]B^\pinv.
\]
Rows with large coefficient norms are appended to $I$. This produces a tall representative submatrix.

\section{Quaternion MaxVol and RectMaxVol algorithms}
\label{sec:algorithms}

We now present the two main algorithms. The formulas are close to the real and complex ones, but the order of multiplication has to be preserved. This is the main practical point in the quaternion case.

\subsection{Rank-one inverse updates}

The quaternion Sherman--Morrison formula follows from the Woodbury identity over $\Hq$,
whose proof is given in Appendix~\ref{app:woodbury-updates}.
Let $A\in\Hq^{k\times k}$ be invertible, and let $u,v\in\Hq^k$. If $1+v^*A^{-1}u\neq 0$, then
\begin{equation}
\label{eq:quat_SM}
    (A+uv^*)^{-1}
    = A^{-1}-A^{-1}u\left(1+v^*A^{-1}u\right)^{-1}v^*A^{-1}.
\end{equation}
The scalar inverse appears in the middle of the product. This position is important because quaternions do not commute.
Thus the formulas are formally close to the real and complex Sherman--Morrison updates, but the order of the factors is part of the statement. Moving the scalar inverse to a different position would not be valid in general over \(\mathbb H\).

If the $q$th column of $A$ is replaced by a new column $b$, then
\[
    A_{\rm new}=A+(b-a_q)e_q^*,
\]
where $a_q$ is the old $q$th column. Formula~\eqref{eq:quat_SM} gives
\begin{equation}
\label{eq:column_update}
    A_{\rm new}^{-1}
    =A^{-1}-A^{-1}(b-a_q)
    \left(1+e_q^*A^{-1}(b-a_q)\right)^{-1}e_q^*A^{-1}.
\end{equation}
Similarly, if the $q$th row is replaced by a new row $b^*$, then
\begin{equation}
\label{eq:row_update}
    A_{\rm new}^{-1}
    =A^{-1}-A^{-1}e_q
    \left(1+(b^*-a_q^*)A^{-1}e_q\right)^{-1}(b^*-a_q^*)A^{-1}.
\end{equation}
These two updates are used in the square Greedy MaxVol routine. They are written separately because row and column replacements do not lead to the same
multiplication order over $\Hq$.

\subsection{Quaternion Greedy MaxVol}

The Greedy MaxVol algorithm alternates between row and column phases. In the row phase, it computes $C=A[:,J]B^{-1}$ and looks for an entry $C_{pq}$ with large modulus. If $|C_{pq}|>1+\eps$, the $q$th selected row is replaced by row $p$. The column phase is the same idea applied to $C'=B^{-1}A[I,:]$.

\begin{algorithm}[ht!]
\caption{Quaternion Greedy MaxVol}
\label{alg:q_greedy_maxvol}
\begin{algorithmic}[1]
\Require $A\in\Hq^{m\times n}$, core size $k$, tolerance $\eps$, maximum number of sweeps $s_{\max}$
\Ensure row indices $I$, column indices $J$, core $B=A[I,J]$
\State Initialize $I$ and $J$ by row/column norms or by quaternion QR pivots.
\State $B\gets A[I,J]$ and compute $B^{-1}$, for example via the complex embedding $\Phi$.
\For{$s=1,\ldots,s_{\max}$}
    \State $\mathrm{changed}\gets \mathrm{false}$
    \While{true}
        \State $C\gets A[:,J]B^{-1}$
        \State $(p,q)\gets \argmax_{i\notin I,\,1\le j\le k}|C_{ij}|$
        \If{$|C_{pq}|\le 1+\eps$}
            \State \textbf{break}
        \EndIf
        \State Replace the $q$th selected row by $p$.
        \State Update $B$ and $B^{-1}$ using the row update~\eqref{eq:row_update}.
        \State $\mathrm{changed}\gets \mathrm{true}$
    \EndWhile
    \While{true}
        \State $C'\gets B^{-1}A[I,:]$
        \State $(q,r)\gets \argmax_{1\le i\le k,\,j\notin J}|C'_{ij}|$
        \If{$|C'_{qr}|\le 1+\eps$}
            \State \textbf{break}
        \EndIf
        \State Replace the $q$th selected column by $r$.
        \State Update $B$ and $B^{-1}$ using the column update~\eqref{eq:column_update}.
        \State $\mathrm{changed}\gets \mathrm{true}$
    \EndWhile
    \If{$\mathrm{changed}=\mathrm{false}$}
        \State \textbf{break}
    \EndIf
\EndFor
\State \Return $I,J,B$
\end{algorithmic}
\end{algorithm}

In exact arithmetic, Algorithm~\ref{alg:q_greedy_maxvol} is naturally written with the true inverse of $B$. In computations, one may recompute the inverse through the complex embedding, or use a quaternion pseudoinverse when the core is close to singular. The monotonicity result of Section~\ref{sec:theory} applies to the exact inverse case. With approximate inverses, it should be interpreted as the principle guiding the algorithm.

\subsection{Quaternion RectMaxVol}

RectMaxVol starts from a square core and then adds rows. The goal is to build a tall submatrix whose interpolation coefficients are bounded. Let $B\in\Hq^{r\times k}$ have full column rank and suppose we append a row $a^*\in\Hq^{1\times k}$. Then
\[
    B_{\rm new}=\begin{bmatrix} B\\ a^*\end{bmatrix}.
\]
Since $B$ has full column rank,
\[
    B^\pinv=(B^*B)^{-1}B^*.
\]
Let
\begin{equation}
\label{eq:rect_update_quantities}
    G=B^*B,
    \qquad c=G^{-1}a,
    \qquad d=1+a^*c,
    \qquad s=Bc.
\end{equation}
Because $G$ is Hermitian positive definite, $d\in\R_{>0}$. Hence $d$ commutes with all quaternion quantities. The append-row update is
\begin{equation}
\label{eq:append_row_pinv_update}
    B_{\rm new}^\pinv
    =\begin{bmatrix}
        B^\pinv-cd^{-1}s^* & cd^{-1}
      \end{bmatrix}.
\end{equation}
This update is cheap once $G^{-1}$ or an equivalent representation is available. Its derivation
is given in Proposition~\ref{prop:append_row_update}; it relies on the quaternion Woodbury
identity, recalled in Appendix~\ref{app:woodbury-updates}.

\begin{algorithm}[ht!]
\caption{Quaternion RectMaxVol}
\label{alg:q_rect_maxvol}
\begin{algorithmic}[1]
\Require $A\in\Hq^{m\times n}$, initial rows $I$, fixed columns $J$, threshold $\tau$, maximum number of rows $r_{\max}$
\Ensure enlarged row set $I$ and rectangular core $B=A[I,J]$
\State $B\gets A[I,J]$ and compute $B^\pinv$.
\While{$|I|<r_{\max}$}
    \State $F\gets A[:,J]B^\pinv$
    \State $\ell_i\gets \|F[i,:]\|_2$ for all $i\notin I$
    \State $p\gets \argmax_{i\notin I}\ell_i$
    \If{$\ell_p\le \tau$}
        \State \textbf{break}
    \EndIf
    \State Append $p$ to $I$ and append $A[p,J]$ to $B$.
    \State Update $B^\pinv$ using~\eqref{eq:append_row_pinv_update}.
\EndWhile
\State \Return $I,B$
\end{algorithmic}
\end{algorithm}

RectMaxVol will also be used below to build preconditioners for overdetermined quaternion least-squares problems. If $A\in\Hq^{m\times n}$ with $m\gg n$, the square variant selects $n$ rows and uses $T=B^{-1}$ as a right preconditioner. The rectangular variant selects $r\ge n$ rows, forms $S=A[I,:]$, computes a quaternion QR factorization $S=QR$, and uses $T=R^{-1}$. Then the selected rows of the preconditioned matrix satisfy
\[
    A[I,:]R^{-1}=Q,
\]
so they have orthonormal columns. This is the stabilization mechanism used in the experiments,
and it is made precise in Proposition~\ref{prop:rectmaxvol-preconditioning-bound}.

\section{Theoretical guarantees}
\label{sec:theory}



This section gives the main theoretical properties used in the paper. The logic is the following.
We first recall how the complex embedding allows us to define a two-sided inverse and a volume
for quaternion matrices. We then prove that quaternion volume is multiplicative and that the
volume of a rank-one update of the identity is controlled by a scalar quaternion. These facts imply
that every successful row or column swap in the square MaxVol procedure increases the current
core volume.

After that, we connect the stopping criterion of the algorithm with the usual notion of
dominance. This gives a simple interpretation of what the algorithm returns: not a globally
maximal-volume submatrix, but a locally dominant or quasi-dominant core. 
We also record the exact quaternion CUR identity in the full-rank case, and prove a simple interpolation stability bound for the CUR approximation associated with a selected core.

Finally, we discuss the rectangular case. We prove the append-row pseudoinverse update used by
RectMaxVol, and then show why RectMaxVol gives a natural right preconditioner for quaternion
least-squares problems. The last subsection explains how the exact theory should be interpreted
when the inverse is computed approximately.

The results in this section should therefore be read as quaternion counterparts of the local MaxVol and RectMaxVol mechanisms used in the real and complex literature. Some final identities have the same form as in the commutative setting. This similarity is useful, but it is not automatic. In the quaternion case, the proof has to account for noncommutativity, for the distinction between left and right multiplication, and for the absence of an ordinary determinant with all the usual properties. The complex embedding, the Study determinant, quaternion singular values, and Hermitian positivity are the tools that allow the classical MaxVol intuition to survive in the quaternion setting.

\subsection{Quaternion inverse through the complex embedding}

\begin{lemma}
\label{lem:inverse_embedding}
Let $B\in\Hq^{k\times k}$. If $\Phi(B)$ is invertible, then there exists a unique matrix $B^{-1}\in\Hq^{k\times k}$ such that
\[
    \Phi(B^{-1})=\Phi(B)^{-1}.
\]
Moreover, $B^{-1}$ is a two-sided inverse: $BB^{-1}=B^{-1}B=I$.
\end{lemma}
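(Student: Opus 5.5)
The plan is to characterize the image of $\Phi$ and show that it is closed under taking inverses. Then injectivity and multiplicativity of $\Phi$, as recalled in Section~\ref{sec:preliminaries}, will do the rest.

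First, identify $\Phi(\Hq^{k\times k})$ as the set of complex $2k\times 2k$ matrices $M$ satisfying
\[
    J M = \overline{M} J, \qquad
    J=\begin{bmatrix} 0 & I_k\\ -I_k & 0\end{bmatrix}.
\]
One inclusion is a direct block computation. If $M=\Phi(B)$ with $B=X+Yj$, then both $JM$ and $\overline{M}J$ equal
\[
    \begin{bmatrix} -\overline{Y} & \overline{X}\\ -X & -Y\end{bmatrix}.
\]
For the converse, write $M=\begin{bmatrix} M_{11}&M_{12}\\ M_{21}&M_{22}\end{bmatrix}$. The identity $JM=\overline{M}J$ forces $M_{21}=-\overline{M_{12}}$ and $M_{22}=\overline{M_{11}}$. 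Hence $M=\Phi(M_{11}+M_{12}j)$. This characterization is the key structural step.

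Second, suppose $\Phi(B)$ is invertible and set $N=\Phi(B)^{-1}$. Conjugating the relation $J\Phi(B)=\overline{\Phi(B)}J$ and inverting gives
\[
    N J^{-1} = J^{-1}\overline{N}.
\]
Since $J^{-1}=-J$, this yields $JN=\overline{N}J$. By the first step, $N=\Phi(C)$ for some $C\in\Hq^{k\times k}$; define $B^{-1}:=C$. Uniqueness follows immediately from injectivity of $\Phi$, because any other $C'$ with $\Phi(C')=\Phi(B)^{-1}$ satisfies $\Phi(C')=\Phi(C)$. Multiplicativity then gives
\[
    \Phi(BC)=\Phi(B)\Phi(C)=I_{2k}=\Phi(I_k),
    \qquad
    \Phi(CB)=\Phi(C)\Phi(B)=I_{2k}=\Phi(I_k).
\]
By injectivity, $BC=CB=I_k$, so $B^{-1}$ is a two-sided inverse.

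The main obstacle is the closure of the image of $\Phi$ under inversion. I would handle it through the antilinear relation $JM=\overline{M}J$ as above. An alternative is to note that the image is a real subalgebra of finite dimension containing $I_{2k}$; invertibility of $\Phi(B)$ then makes left multiplication by $\Phi(B)$ an injective, hence bijective, real-linear map of the image onto itself. This produces $C$ with $\Phi(B)\Phi(C)=I_{2k}$, and the rest of the argument proceeds as before.
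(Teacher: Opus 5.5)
Your proof is correct and follows the same route as the paper: you show that the image of $\Phi$ is closed under inversion, then get uniqueness and the two-sided inverse from injectivity and multiplicativity of $\Phi$. Your characterization of the image by $JM=\overline{M}J$ supplies the justification the paper only asserts, namely that $M^{-1}$ inherits the block structure, so your version of this key step is more complete.
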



\begin{proof}
The complex embedding is injective and multiplicative. We first check that
\(\Phi(B)^{-1}\) belongs to the range of \(\Phi\). A matrix in the range of
\(\Phi\) has the block form
\[
    M=
    \begin{bmatrix}
        X & Y\\
        -\overline Y & \overline X
    \end{bmatrix}.
\]
Equivalently, it is characterized by the symmetry associated with the standard
complex representation of quaternion matrices. This symmetry is preserved under
matrix inversion. Indeed, if \(M\) satisfies it and is nonsingular, then the
identity \(MM^{-1}=I\) implies that \(M^{-1}\) satisfies the same block relation.
Hence there exist \(P,Q\in\mathbb C^{k\times k}\) such that
\[
    \Phi(B)^{-1}
    =
    \begin{bmatrix}
        P & Q\\
        -\overline Q & \overline P
    \end{bmatrix}.
\]
Therefore \(\Phi(B)^{-1}\) is the embedding of a unique quaternion matrix, which
we denote by \(B^{-1}\).

Using multiplicativity of \(\Phi\), we get
\[
    \Phi(BB^{-1})=\Phi(B)\Phi(B^{-1})=\Phi(B)\Phi(B)^{-1}=I.
\]
By injectivity, \(BB^{-1}=I\). Similarly,
\[
    \Phi(B^{-1}B)=\Phi(B)^{-1}\Phi(B)=I,
\]
and therefore \(B^{-1}B=I\). Thus \(B^{-1}\) is a two-sided inverse.
\end{proof}

\subsection{Volume identities}

\begin{lemma}[Multiplicativity of the quaternion volume]
\label{lem:volume_multiplicative}
Let $X,Y\in\Hq^{k\times k}$. With the definition $\vol(B)=\sqrt{\Sdet(B)}$, one has
\[
    \vol(XY)=\vol(X)\vol(Y).
\]
\end{lemma}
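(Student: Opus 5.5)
The plan is to pass everything through the complex embedding $\Phi$ and use the multiplicativity of the ordinary complex determinant. By definition, $\vol(B)=\sqrt{\Sdet(B)}=\sqrt{|\det\Phi(B)|}$ for any $B\in\Hq^{k\times k}$. The key input is that $\Phi$ is multiplicative, $\Phi(XY)=\Phi(X)\Phi(Y)$, as recalled in Section~\ref{sec:preliminaries}. Since $\Phi(X),\Phi(Y)\in\C^{2k\times 2k}$ are ordinary complex square matrices, the classical identity $\det(MN)=\det(M)\det(N)$ over the commutative field $\C$ applies.

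The steps, in order, are as follows.
\begin{enumerate}[label=(\arabic*)]
\item Write
\[
\Sdet(XY)=|\det\Phi(XY)|=|\det(\Phi(X)\Phi(Y))|.
\]
\item Use multiplicativity of $\det$ over $\C$ and of the modulus to get
\[
|\det\Phi(X)|\,|\det\Phi(Y)|=\Sdet(X)\Sdet(Y).
\]
\item Take nonnegative square roots. This is legitimate since all three quantities are nonnegative reals, and it gives $\vol(XY)=\vol(X)\vol(Y)$.
\end{enumerate}
No invertibility assumption is needed. If either factor is singular, meaning its embedding has zero determinant, both sides vanish, and the argument above covers this case without modification.

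The only point requiring care, and the step I regard as the main obstacle, is that the embedding really is multiplicative with the convention $A=X+Yj$ and the block form \eqref{eq:complex_embedding}. This rests on the relation $jz=\bar z j$ for $z\in\C$, which produces the conjugated lower blocks. Since the paper states multiplicativity as a known property of $\Phi$, I will invoke it directly rather than re-derive it.

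Optionally, I would add a remark tying the result to the equivalent description of $\vol(B)$ as the product of quaternion singular values. The eigenvalues of $\Phi(B)^*\Phi(B)=\Phi(B^*B)$ are the squared quaternion singular values, each appearing twice. Hence $|\det\Phi(B)|=\prod_i\sigma_i(B)^2$, which confirms that the two definitions of volume in \eqref{eq:square_volume} agree. The multiplicativity proof itself does not depend on this remark.
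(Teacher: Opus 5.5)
Your proposal is correct and follows the same route as the paper's proof: multiplicativity of $\Phi$, multiplicativity of the complex determinant and of the modulus to get $\Sdet(XY)=\Sdet(X)\Sdet(Y)$, then square roots. Your added observations are also sound: no invertibility is needed, and $\Phi(B)^*\Phi(B)=\Phi(B^*B)$ identifies $\vol(B)$ with the product of the quaternion singular values.
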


\begin{proof}
Since $\Phi$ is multiplicative,
\[
    \Phi(XY)=\Phi(X)\Phi(Y).
\]
Therefore
\[
    \Sdet(XY)=|\det(\Phi(XY))|
    =|\det(\Phi(X))|\,|\det(\Phi(Y))|
    =\Sdet(X)\Sdet(Y).
\]
Taking square roots gives the result.
\end{proof}

\begin{lemma}[Rank-one volume factor]
\label{lem:rank_one_factor}
Let $u\in\Hq^{k\times 1}$ and $v^*\in\Hq^{1\times k}$. Define
\[
    M=I+uv^*, \qquad d=1+v^*u\in\Hq.
\]
Then
\[
    \vol(M)=|d|.
\]
In particular, $M$ is invertible if and only if $d\neq 0$.
\end{lemma}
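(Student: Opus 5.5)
The plan is to pass to the complex embedding and use the classical matrix determinant lemma there. Since $\Phi$ is multiplicative, real-linear and maps $I_k$ to $I_{2k}$, we have
\[
    \Phi(M)=I_{2k}+\Phi(u)\Phi(v^*),
\]
where $\Phi(u)\in\C^{2k\times 2}$ and $\Phi(v^*)\in\C^{2\times 2k}$. So $\Phi(M)$ is a rank-two update of the identity, not a rank-one update.

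The first step is to apply Sylvester's determinant identity $\det(I_{2k}+PQ)=\det(I_2+QP)$ with $P=\Phi(u)$ and $Q=\Phi(v^*)$. This gives
\[
    \det\Phi(M)=\det\bigl(I_2+\Phi(v^*)\Phi(u)\bigr)=\det\Phi(1+v^*u)=\det\Phi(d).
\]
Multiplicativity of $\Phi$ is used once more here, now for the product $v^*u$ of a $1\times k$ matrix and a $k\times 1$ matrix.

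The second step is to compute the embedding of a scalar quaternion. Write $d=x+yj$ with $x,y\in\C$. Then
\[
    \Phi(d)=\begin{bmatrix} x & y\\ -\overline y & \overline x\end{bmatrix},
\]
so $\det\Phi(d)=|x|^2+|y|^2=|d|^2$. Hence $\Sdet(M)=|\det\Phi(M)|=|d|^2$, and by~\eqref{eq:square_volume}, $\vol(M)=|d|$.

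The invertibility claim then follows. By Lemma~\ref{lem:inverse_embedding}, $M$ is invertible whenever $\Phi(M)$ is. Conversely, if $M$ has a two-sided inverse, multiplicativity shows that $\Phi(M)$ is invertible. So $M$ is invertible if and only if $\det\Phi(M)\neq 0$, which holds if and only if $|d|\neq 0$, that is, $d\neq 0$.

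The step that needs the most care is the use of the Sylvester identity. It must be applied to genuinely complex matrices, since it is not available directly over $\Hq$. One also has to check that $\Phi$ is compatible with non-square shapes, so that $\Phi(v^*)\Phi(u)=\Phi(v^*u)$ is a $2\times 2$ block. This compatibility is the general multiplicativity already stated for $\Phi$, so I expect no real obstacle beyond bookkeeping.
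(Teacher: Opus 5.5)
Your proposal is correct and follows the paper's proof essentially step for step: you embed via $\Phi$, apply the Weinstein--Aronszajn (Sylvester) identity to reduce to $\det\Phi(d)$, and use $|\det\Phi(d)|=|d|^2$. Your explicit argument for the invertibility clause, via Lemma~\ref{lem:inverse_embedding} and multiplicativity of $\Phi$, fills in a step the paper leaves implicit.
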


\begin{proof}
Using the complex embedding,
\[
    \Phi(M)=I_{2k}+\Phi(u)\Phi(v^*).
\]
The matrices $\Phi(u)$ and $\Phi(v^*)$ have sizes $2k\times 2$ and $2\times 2k$, respectively. By the Weinstein--Aronszajn identity,
\[
    \det(I_{2k}+\Phi(u)\Phi(v^*))
    =\det(I_2+\Phi(v^*)\Phi(u)).
\]
Since $\Phi$ preserves multiplication,
\[
    I_2+\Phi(v^*)\Phi(u)=\Phi(1+v^*u)=\Phi(d).
\]
For a scalar quaternion $d$, one has $|\det(\Phi(d))|=|d|^2$. Therefore
\[
    \Sdet(M)=|d|^2,
\]
and the result follows by taking the square root.
\end{proof}

\subsection{Volume increase for row and column swaps}

Let $A\in\Hq^{m\times n}$ and let $B=A[I,J]\in\Hq^{k\times k}$ be nonsingular. Recall the coefficient matrices
\[
    C=A[:,J]B^{-1}, \qquad C'=B^{-1}A[I,:].
\]

\begin{lemma}[Row swap]
\label{lem:row_swap}
Fix $J$ and replace the $q$th selected row of $B$ by a row indexed by $p\notin I$. Let $B_{\rm new}$ be the updated core. Then
\[
    \vol(B_{\rm new})=|C_{pq}|\,\vol(B).
\]
Hence, if $|C_{pq}|>1$, the row swap strictly increases the volume.
\end{lemma}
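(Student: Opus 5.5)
The approach is to write the new core as a left multiplication of $B$ by a rank-one perturbation of the identity. Then Lemma~\ref{lem:volume_multiplicative} and Lemma~\ref{lem:rank_one_factor} reduce the volume ratio to the modulus of a scalar quaternion.

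Let $a^*=A[p,J]\in\Hq^{1\times k}$ be the incoming row and $b_q^*=e_q^*B$ the outgoing $q$th row. Then
\[
    B_{\rm new}=B+e_q\,(a^*-b_q^*).
\]
Since $B$ is invertible (Lemma~\ref{lem:inverse_embedding}), factor $B$ on the right:
\[
    B_{\rm new}=\bigl(I+e_q(a^*-b_q^*)B^{-1}\bigr)B .
\]
This uses only associativity and $BB^{-1}=I$, so it is valid over $\Hq$. The order matters: the correction must be placed as $e_q(\cdot)B^{-1}$ on the left of $B$, never commuted.

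Set $u=e_q$ and $v^*=(a^*-b_q^*)B^{-1}$. Lemma~\ref{lem:rank_one_factor} gives
\[
    \vol\bigl(I+uv^*\bigr)=|1+v^*u|, \qquad 1+v^*u=1+(a^*-b_q^*)B^{-1}e_q .
\]
We have $b_q^*B^{-1}e_q=e_q^*BB^{-1}e_q=1$. Moreover, $a^*B^{-1}e_q$ is the $q$th entry of row $p$ of $A[:,J]B^{-1}$, namely $C_{pq}$. Hence $1+v^*u=C_{pq}$, and Lemma~\ref{lem:volume_multiplicative} yields
\[
    \vol(B_{\rm new})=|C_{pq}|\,\vol(B).
\]
If $|C_{pq}|>1$ and $\vol(B)>0$ (true since $B$ is nonsingular), the inequality is strict.

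The main point to check is the noncommutative bookkeeping. First, the identity $e_q^*BB^{-1}e_q=1$ and the identification of $a^*B^{-1}e_q$ with $C_{pq}$ must be made with the products in the stated order; no scalar is moved across a matrix. Second, Lemma~\ref{lem:rank_one_factor} is stated for $M=I+uv^*$ with $d=1+v^*u$, and it is applied here with $u$ a column and $v^*$ a row exactly in that form. Third, one should confirm that $\vol$ of a product of square quaternion matrices factors even though the factors do not commute; Lemma~\ref{lem:volume_multiplicative} gives this through the complex embedding. With these checks, the argument is a direct analogue of the real case.
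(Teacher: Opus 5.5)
Your proof is correct and is essentially the paper's argument. The paper factors on the other side, $B_{\rm new}=B(I+B^{-1}e_q\delta)$ with $u=B^{-1}e_q$ and $v^*=\delta$, but Lemmas~\ref{lem:volume_multiplicative} and~\ref{lem:rank_one_factor} give the same scalar $1+\delta B^{-1}e_q=C_{pq}$ (your remark that the correction \emph{must} sit to the left of $B$ is an overstatement, since both factorizations are valid over $\Hq$, but this does not affect the argument).
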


\begin{proof}
Let $b=A[p,J]$ be the new row restricted to the selected columns, and let $B_{q:}$ be the old $q$th row. Set $\delta=b-B_{q:}$. Then
\[
    B_{\rm new}=B+e_q\delta = B(I+B^{-1}e_q\delta).
\]
By Lemma~\ref{lem:volume_multiplicative},
\[
    \vol(B_{\rm new})=\vol(B)\vol(I+B^{-1}e_q\delta).
\]
Using Lemma~\ref{lem:rank_one_factor} with $u=B^{-1}e_q$ and $v^*=\delta$, we get
\[
    \vol(I+B^{-1}e_q\delta)=|1+\delta B^{-1}e_q|.
\]
Now
\[
    1+\delta B^{-1}e_q
    =1+(b-B_{q:})B^{-1}e_q
    =bB^{-1}e_q.
\]
The scalar $bB^{-1}e_q$ is precisely the $(p,q)$ entry of $C=A[:,J]B^{-1}$. This proves the claim.
\end{proof}

\begin{lemma}[Column swap]
\label{lem:column_swap}
Fix $I$ and replace the $q$th selected column of $B$ by a column indexed by $r\notin J$. Let $B_{\rm new}$ be the updated core. Then
\[
    \vol(B_{\rm new})=|C'_{qr}|\,\vol(B).
\]
Hence, if $|C'_{qr}|>1$, the column swap strictly increases the volume.
\end{lemma}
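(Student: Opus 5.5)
The plan is to mirror the proof of Lemma~\ref{lem:row_swap}, but to factor the identity on the left instead of the right. This matches the fact that the column coefficient matrix $C'=B^{-1}A[I,:]$ carries $B^{-1}$ on the left.

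First, I set up the rank-one update. Let $b=A[I,r]\in\Hq^{k\times 1}$ be the new column restricted to the selected rows, let $B_{:q}$ be the old $q$th column, and set $\delta=b-B_{:q}$. Then
\[
    B_{\rm new}=B+\delta e_q^* = B\left(I+B^{-1}\delta e_q^*\right),
\]
where the factorization uses that $B^{-1}$ is a two-sided inverse (Lemma~\ref{lem:inverse_embedding}). By Lemma~\ref{lem:volume_multiplicative},
\[
    \vol(B_{\rm new})=\vol(B)\,\vol\left(I+B^{-1}\delta e_q^*\right).
\]

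Next, I apply Lemma~\ref{lem:rank_one_factor} with $u=B^{-1}\delta$ and $v^*=e_q^*$. This gives the volume factor $|1+e_q^*B^{-1}\delta|$.

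Finally, I simplify the scalar. Since $e_q^*B^{-1}B_{:q}=e_q^*e_q=1$, we get
\[
    1+e_q^*B^{-1}\delta=e_q^*B^{-1}b .
\]
This is exactly the $(q,r)$ entry of $C'=B^{-1}A[I,:]$, because column $r$ of $A[I,:]$ is $b$. Strict increase when $|C'_{qr}|>1$ then follows, since $\vol(B)>0$ for nonsingular $B$.

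The only delicate point is the multiplication order. The scalar $1+e_q^*B^{-1}\delta$ must come out as $e_q^*B^{-1}b$ with $B^{-1}$ applied on the left. This holds because every product above keeps its factors in their original order, and no quaternion scalar is moved past another factor. Beyond that, the argument is routine given the earlier lemmas.
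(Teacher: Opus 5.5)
Your proof is correct and uses the same ingredients as the paper's: multiplicativity of the volume plus the rank-one factor lemma, giving the scalar $1+e_q^*B^{-1}\delta=e_q^*B^{-1}b=C'_{qr}$. The only difference is cosmetic. You factor $B_{\rm new}=B(I+B^{-1}\delta e_q^*)$ with $u=B^{-1}\delta$, $v^*=e_q^*$, which puts $B$ on the left exactly as in the row-swap proof and so does not match your stated plan of factoring ``on the left instead of the right''; the paper instead writes $B_{\rm new}=(I+\Delta e_q^*B^{-1})B$ with $u=\Delta$, $v^*=e_q^*B^{-1}$, and both factorizations yield the identical scalar.
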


\begin{proof}
Let $a=A[I,r]$ be the new column restricted to the selected rows, and let $B_{:q}$ be the old $q$th column. Set $\Delta=a-B_{:q}$. Then
\[
    B_{\rm new}=B+\Delta e_q^*=(I+\Delta e_q^*B^{-1})B.
\]
By Lemma~\ref{lem:volume_multiplicative},
\[
    \vol(B_{\rm new})=\vol(I+\Delta e_q^*B^{-1})\vol(B).
\]
Using Lemma~\ref{lem:rank_one_factor} with $u=\Delta$ and $v^*=e_q^*B^{-1}$ gives
\[
    \vol(I+\Delta e_q^*B^{-1})=|1+e_q^*B^{-1}\Delta|.
\]
Since
\[
    1+e_q^*B^{-1}(a-B_{:q})=e_q^*B^{-1}a,
\]
and $e_q^*B^{-1}a$ is the $(q,r)$ entry of $C'=B^{-1}A[I,:]$, the result follows.
\end{proof}

\begin{theorem}[Monotone growth under successful quaternion MaxVol swaps]
\label{thm:monotone_swaps}
Let $A\in\Hq^{m\times n}$ and let $B=A[I,J]$ be a nonsingular $k\times k$ core. Consider a two-sided MaxVol procedure that accepts a row swap only if $|C_{pq}|>1$ and a column swap only if $|C'_{qr}|>1$. Then every accepted swap strictly increases $\vol(B)$. More precisely,
\[
\vol(B_{\rm new})=
\begin{cases}
|C_{pq}|\,\vol(B), & \text{for an accepted row swap},\\[1mm]
|C'_{qr}|\,\vol(B), & \text{for an accepted column swap}.
\end{cases}
\]
\end{theorem}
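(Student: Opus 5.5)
The theorem is essentially a packaging of Lemma~\ref{lem:row_swap} and Lemma~\ref{lem:column_swap}, so the plan is to reduce it to those two identities and then add the strict-inequality argument. I will treat a single accepted swap at a time, with the current core $B=A[I,J]$ nonsingular. The swap is either a row swap, which changes $I$ with $J$ fixed, or a column swap, which changes $J$ with $I$ fixed. In each case I will apply the corresponding lemma to the current $B$, $C=A[:,J]B^{-1}$ and $C'=B^{-1}A[I,:]$. This yields the two displayed formulas
\[
\vol(B_{\rm new})=|C_{pq}|\vol(B)\quad\text{and}\quad \vol(B_{\rm new})=|C'_{qr}|\vol(B)
\]
directly. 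The only requirement is that $B$ be nonsingular, which is needed so that $B^{-1}$ exists by Lemma~\ref{lem:inverse_embedding}.

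For strict growth, I use nonsingularity to get $\vol(B)=\sqrt{\Sdet(B)}>0$. Since $\Phi(B)$ is invertible, $|\det\Phi(B)|>0$. The acceptance rule gives a factor $|C_{pq}|>1$ or $|C'_{qr}|>1$, so $\vol(B_{\rm new})>\vol(B)$.

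The one point that needs care is sustaining the argument along a sequence of swaps, because each lemma assumes the current core is nonsingular. I handle this inductively. After an accepted swap, $\vol(B_{\rm new})\ge\vol(B)>0$, so $\Sdet(B_{\rm new})\neq 0$. Hence $\Phi(B_{\rm new})$ is invertible and $B_{\rm new}$ is again nonsingular by Lemma~\ref{lem:inverse_embedding}. Equivalently, Lemma~\ref{lem:rank_one_factor} shows that the rank-one factor is invertible exactly when the scalar $d$ is nonzero, and here $|d|>1$. The hypotheses of the lemmas are therefore restored at every step, and induction on the number of accepted swaps gives monotone strict growth throughout the procedure.

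I do not expect a real obstacle. The noncommutativity issues, namely the ordering of the factorizations $B(I+B^{-1}e_q\delta)$ and $(I+\Delta e_q^*B^{-1})B$ and the Weinstein--Aronszajn step inside the embedding, are already handled in the two lemmas. The only thing to verify is that $C$ and $C'$ are recomputed from the current $B$ after each swap, which is what Algorithm~\ref{alg:q_greedy_maxvol} does in exact arithmetic.
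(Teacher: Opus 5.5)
Your proposal is correct and matches the paper's proof, which obtains the theorem directly from Lemmas~\ref{lem:row_swap} and~\ref{lem:column_swap}. Your extra points make explicit two things the paper leaves implicit: that $\vol(B)>0$ for nonsingular $B$, which is needed for the increase to be strict, and that nonsingularity is preserved along a sequence of accepted swaps.
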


\begin{proof}
This is a direct consequence of Lemmas~\ref{lem:row_swap} and~\ref{lem:column_swap}.
\end{proof}

This theorem is the quaternion analogue of the classical MaxVol swap property. The proof shows that, once volume is defined through the Study determinant and the coefficient matrices are formed with the correct right/left order, the volume multiplier is again the modulus of the corresponding quaternion interpolation coefficient.

\begin{corollary}[No cycling in exact arithmetic]
Assume that the algorithm uses exact inverses and accepts only swaps satisfying $|C_{pq}|>1$ or $|C'_{qr}|>1$. Then the algorithm cannot revisit the same core after a successful swap.
\end{corollary}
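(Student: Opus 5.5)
The plan is a potential-function argument, with $\vol(B)$ as the potential. The starting core $B^{(0)}=A[I_0,J_0]$ is nonsingular, so $\vol(B^{(0)})>0$. By induction on the number of accepted swaps, every core reached is nonsingular. If $B^{(t)}$ is nonsingular and the algorithm accepts a swap with $|C_{pq}|>1$ (or $|C'_{qr}|>1$), Theorem~\ref{thm:monotone_swaps} gives
\[
\vol(B^{(t+1)})=|C_{pq}|\,\vol(B^{(t)})>\vol(B^{(t)})>0
\]
(respectively with $|C'_{qr}|$). A positive volume means $\Sdet(B^{(t+1)})=|\det\Phi(B^{(t+1)})|\neq 0$. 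Hence $\Phi(B^{(t+1)})$ is invertible, and Lemma~\ref{lem:inverse_embedding} supplies the exact two-sided inverse needed to form the next coefficient matrices. This keeps the hypotheses of Theorem~\ref{thm:monotone_swaps} valid at every step.

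From the induction, the sequence $\vol(B^{(0)})<\vol(B^{(1)})<\vol(B^{(2)})<\cdots$ is strictly increasing. Suppose the algorithm revisited a core, that is, $(I_s,J_s)=(I_t,J_t)$ with $s<t$ and at least one successful swap in between. Then $B^{(s)}=B^{(t)}$ as matrices, so their volumes are equal. This contradicts $\vol(B^{(s)})<\vol(B^{(t)})$.

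One point needs care. The same index sets could appear with a different ordering of the selected rows or columns. A permutation changes $B$ to $P_1BP_2$, where $P_1,P_2$ are real permutation matrices. By Lemma~\ref{lem:volume_multiplicative}, and since $\vol(P)=1$ because $\Phi(P)$ is a permutation matrix, the volume is unchanged. So the contradiction holds for the core as an unordered selection as well.

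The main obstacle is not the monotonicity itself but making sure the induction never breaks. The step to watch is the claim that exact inverses exist at every stage, so that $C$ and $C'$ are well defined and Lemmas~\ref{lem:row_swap} and~\ref{lem:column_swap} apply. This is exactly where volume positivity, through the Study determinant and Lemma~\ref{lem:inverse_embedding}, is used. As a remark, the argument also shows that the algorithm terminates. There are only finitely many pairs $(I,J)$ and none can repeat, so only finitely many swaps can be accepted.
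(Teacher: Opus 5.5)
Your proposal is correct and follows essentially the same argument as the paper: by Theorem~\ref{thm:monotone_swaps} the volume strictly increases at each accepted swap, so revisiting a core would repeat a volume, which is impossible. You also spell out two points the paper leaves implicit, and both are handled correctly: nonsingularity, and hence the exact inverse from Lemma~\ref{lem:inverse_embedding}, propagates along the swap sequence; and reordering the selected indices does not change the volume.
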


\begin{proof}
Every successful swap strictly increases the volume. Returning to a previous core would return to its previous volume, which is impossible.
\end{proof}


\subsection{Dominance and exact quaternion CUR recovery}
\label{subsec:qcur-dominance}

The previous theorem shows that a successful row or column replacement increases the volume
of the current square core. We now connect this local volume increase with the interpolation
viewpoint behind CUR approximations. This also explains what the stopping criterion of the
algorithm means.

Let \(A\in\mathbb H^{m\times n}\), and let \(I\subset\{1,\ldots,m\}\) and
\(J\subset\{1,\ldots,n\}\) be two index sets of cardinality \(k\). We denote
\[
        B=A[I,J],\qquad C=A[:,J],\qquad R=A[I,:].
\]
When \(B\) is nonsingular, the associated skeleton approximation is
\[
        A \approx C B^{-1} R.
\]
The following proposition is the quaternion analogue of the classical exact CUR identity.

\begin{proposition}[Exact quaternion CUR recovery]
\label{prop:exact-qcur}
Let \(A\in\mathbb H^{m\times n}\) have rank \(k\), and assume that
\(B=A[I,J]\in\mathbb H^{k\times k}\) is nonsingular. Then
\[
        A = A[:,J] B^{-1} A[I,:].
\]
\end{proposition}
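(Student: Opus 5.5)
The plan is to work with the quaternion notion of rank as the dimension of the right column space, equivalently the number of nonzero quaternion singular values, so that $\Phi(A)$ has complex rank $2k$. I will show that every column of $A$ is a right $\Hq$-linear combination of the columns $A[:,J]$, and then identify the coefficients using the rows in $I$.

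\textbf{Step 1.} Since $B=A[I,J]$ is nonsingular, the columns of $A[:,J]$ are right linearly independent over $\Hq$. Indeed, if $A[:,J]x=0$, then restricting to the rows in $I$ gives $Bx=0$, hence $x=0$ by Lemma~\ref{lem:inverse_embedding}. Because $\operatorname{rank}A=k$, these $k$ columns span the right column space of $A$. So there is $W\in\Hq^{k\times n}$ with $A=A[:,J]\,W$.

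\textbf{Step 2.} Restricting this identity to the rows in $I$ gives $A[I,:]=B\,W$. Multiplying on the left by $B^{-1}$ then gives $W=B^{-1}A[I,:]$. Substituting back yields $A=A[:,J]B^{-1}A[I,:]$. Throughout, $W$ multiplies $A[:,J]$ on the right and $B^{-1}$ acts on the left of $W$, which is the correct order over $\Hq$.

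\textbf{Main obstacle.} The delicate point is Step~1: the claim that $k$ right linearly independent columns span the column space of a rank-$k$ quaternion matrix. This requires rank to be a well-defined dimension of a right $\Hq$-module. To avoid relying on module theory, I would argue through $\Phi$ instead. Write $\widehat{I},\widehat{J}$ for the doubled index sets, that is, the indices in $I$ (resp.\ $J$) together with their shifts by $m$ (resp.\ $n$). By Lemma~\ref{lem:inverse_embedding}, $\Phi(B)$ is a nonsingular $2k\times2k$ submatrix of $\Phi(A)$, namely $\Phi(A)[\widehat I,\widehat J]$ after a permutation. The quaternion SVD $A=U\Sigma V^*$ embeds as $\Phi(A)=\Phi(U)\Phi(\Sigma)\Phi(V)^*$, so $\operatorname{rank}\Phi(A)=2k$. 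The classical complex skeleton identity then gives $\Phi(A)=\Phi(A)[:,\widehat J]\,\Phi(B)^{-1}\,\Phi(A)[\widehat I,:]$. The right-hand side equals $\Phi\bigl(A[:,J]B^{-1}A[I,:]\bigr)$ by multiplicativity, since $\Phi$ commutes with taking submatrices up to this consistent permutation. Injectivity of $\Phi$ concludes the proof.
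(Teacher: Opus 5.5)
Your Steps 1 and 2 are exactly the paper's proof. The paper argues the same way: right-linear independence of the columns of $A[:,J]$ follows from nonsingularity of $B$, then $A=A[:,J]W$ follows from $\operatorname{rank}A=k$, and finally $W=B^{-1}A[I,:]$ is obtained by restricting to the rows in $I$.

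The paper also takes the spanning step for granted. This is legitimate, since right vector spaces over the division ring $\Hq$ have a well-defined dimension. So the obstacle you flag only requires citing standard linear algebra; it is not a real gap.

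Since you say you would argue through $\Phi$ instead, your intended proof is a genuinely different route, and it is also correct:
\begin{enumerate}[label=(\roman*)]
\item Order the doubled indices as $(I,I+m)$ and $(J,J+n)$. Then $\Phi(A)[\widehat I,\widehat J]=\Phi(B)$, $\Phi(A)[:,\widehat J]=\Phi(A[:,J])$ and $\Phi(A)[\widehat I,:]=\Phi(A[I,:])$.
\item The embedded QSVD gives $\operatorname{rank}\Phi(A)=2k$. This uses $\Phi(V^*)=\Phi(V)^*$ and the fact that quaternion rank equals the number of nonzero singular values.
\item Multiplicativity together with injectivity pulls the complex skeleton identity back to $\Hq$.
\end{enumerate}

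Compared with the paper, this route avoids linear algebra over a division ring, but it depends on the SVD-based characterization of quaternion rank and needs some index bookkeeping. The paper's direct argument is shorter and more intrinsic.

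One small citation point concerns invertibility of $\Phi(B)$. If nonsingularity of $B$ means that $B$ has a two-sided quaternion inverse, then invertibility of $\Phi(B)$ follows from $\Phi(B)\Phi(B^{-1})=\Phi(I)=I$ by multiplicativity. Lemma~\ref{lem:inverse_embedding} states the converse implication.
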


\begin{proof}
Let \(C=A[:,J]\). Since \(B=C[I,:]\) is nonsingular, the selected columns of \(C\)
are right-linearly independent. Since \(\operatorname{rank}(A)=k\), every column
of \(A\) belongs to the right span of the columns of \(C\). Hence there exists a
matrix \(X\in\mathbb H^{k\times n}\) such that
\[
        A=CX.
\]
Restricting this identity to the rows indexed by \(I\) gives
\[
        A[I,:]=C[I,:]X=BX.
\]
Multiplying from the left by \(B^{-1}\), we obtain
\[
        X=B^{-1}A[I,:].
\]
Substituting this expression in \(A=CX\) gives
\[
        A=A[:,J]B^{-1}A[I,:].
\]
This proves the identity.
\end{proof}

For a general matrix, the same formula gives an interpolation-based low-rank approximation.
Its stability depends on the size of the interpolation coefficients. This motivates the following
definition.

\begin{definition}[Quaternion quasi-dominant core]
\label{def:q-dominant}
Let \(B=A[I,J]\in\mathbb H^{k\times k}\) be nonsingular. Define
\[
        C_B=A[:,J]B^{-1}\in\mathbb H^{m\times k},
        \qquad
        R_B=B^{-1}A[I,:]\in\mathbb H^{k\times n}.
\]
We say that \(B\) is \(\varepsilon\)-quasi-dominant if
\[
        \max_{i,j}|(C_B)_{ij}|\leq 1+\varepsilon,
        \qquad
        \max_{i,j}|(R_B)_{ij}|\leq 1+\varepsilon.
\]
For \(\varepsilon=0\), we say that \(B\) is dominant.
\end{definition}

This definition is the direct quaternion version of the classical MaxVol dominance condition.
It is well-defined because the modulus of a quaternion coefficient is a nonnegative real number.

\begin{proposition}[Termination implies quasi-dominance]
\label{prop:termination-dominance}
Assume that the two-sided quaternion MaxVol algorithm uses exact inverses and terminates when
no admissible row or column replacement coefficient has modulus larger than \(1+\varepsilon\).
Then the final core \(B=A[I,J]\) is \(\varepsilon\)-quasi-dominant.
\end{proposition}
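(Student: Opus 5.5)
The plan is to unpack the stopping rule and check that it covers every entry of \(C_B\) and \(R_B\). Definition~\ref{def:q-dominant} takes the maximum over \emph{all} entries, whereas the algorithm only tests admissible replacements, namely rows \(i\notin I\) for \(C_B\) and columns \(j\notin J\) for \(R_B\). So I will split the entries of each coefficient matrix into admissible and non-admissible ones. The admissible ones are controlled by the termination hypothesis. The non-admissible ones will turn out to be trivially bounded by one.

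First, I will treat the admissible entries. Because the algorithm uses exact inverses, the matrices \(C=A[:,J]B^{-1}\) and \(C'=B^{-1}A[I,:]\) computed at the final sweep are exactly \(C_B\) and \(R_B\) for the final core. These matrices are well defined by Lemma~\ref{lem:inverse_embedding}, since \(B\) stays nonsingular: Theorem~\ref{thm:monotone_swaps} shows that accepted swaps only multiply the volume by a factor larger than one. Termination means that the last row phase and the last column phase both exit through their break conditions without changing \(I\) or \(J\). That exit forces
\[
\max_{i\notin I,\,q}|(C_B)_{iq}|\le 1+\varepsilon,
\qquad
\max_{q,\,j\notin J}|(R_B)_{qj}|\le 1+\varepsilon .
\]
A subtle point is that the row phase's condition must still hold after the column phase. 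I will handle this by using the final sweep, in which \(\mathrm{changed}=\mathrm{false}\). In that sweep neither \(I\) nor \(J\) moves, so both break conditions are evaluated on the same final core.

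Second, I will treat the non-admissible entries. Write \(I=(i_1,\dots,i_k)\). Row \(i_s\) of \(A[:,J]\) is the \(s\)th row of \(B\), so row \(i_s\) of \(C_B\) is \(e_s^*BB^{-1}=e_s^*\), using that \(B^{-1}\) is a two-sided inverse (Lemma~\ref{lem:inverse_embedding}). Hence these rows of \(C_B\) are rows of the identity, and their entries have modulus \(0\) or \(1\). In the same way, the columns of \(R_B\) indexed by \(J\) equal \(B^{-1}B e_s=e_s\). In both cases the modulus is at most \(1\le 1+\varepsilon\). Combining the two cases gives both maximum bounds in Definition~\ref{def:q-dominant}.

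The main obstacle I expect is not algebraic but a matter of bookkeeping. I need to make sure that the left and right inverse identities are each used on the correct side, which is exactly why two-sidedness from Lemma~\ref{lem:inverse_embedding} is needed over \(\Hq\). I also need to pin down precisely which sweep certifies both conditions at once.
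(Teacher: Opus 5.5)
Your proof is correct and follows the paper's own argument. In both, the admissible entries are bounded by the stopping rule, and the rows of \(C_B\) indexed by \(I\) and the columns of \(R_B\) indexed by \(J\) equal rows and columns of \(I_k\), because \(B^{-1}\) is a two-sided inverse. Your extra bookkeeping is a useful refinement that the paper leaves implicit: you use the final sweep with \(\mathrm{changed}=\mathrm{false}\) so that both break conditions are evaluated on the same core, and you invoke monotone volume growth to keep \(B\) nonsingular.
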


\begin{proof}
At termination of the row phase, no admissible entry of
\[
        C_B=A[:,J]B^{-1}
\]
has modulus larger than \(1+\varepsilon\). The rows indexed by \(I\) satisfy
\[
        C_B[I,:]=A[I,J]B^{-1}=BB^{-1}=I_k.
\]
Thus their entries have modulus at most one. Hence
\[
        \max_{i,j}|(C_B)_{ij}|\leq 1+\varepsilon.
\]

The same argument applies to the column phase. At termination, no admissible entry of
\[
        R_B=B^{-1}A[I,:]
\]
has modulus larger than \(1+\varepsilon\). The columns indexed by \(J\) satisfy
\[
        R_B[:,J]=B^{-1}A[I,J]=B^{-1}B=I_k.
\]
Thus
\[
        \max_{i,j}|(R_B)_{ij}|\leq 1+\varepsilon.
\]
The two inequalities are exactly the definition of an \(\varepsilon\)-quasi-dominant core.
\end{proof}

This proposition is local. It does not imply that the selected core has globally maximal volume.
It says that the algorithm has reached a state in which all one-row and one-column replacements
are controlled. This is the property that is useful for interpolation and numerical stability.

\subsection{Interpolation stability of the quaternion CUR approximation}
\label{subsec:cur-stability}

The quasi-dominance condition has a direct interpretation in terms of interpolation stability.
We now make this point explicit. This result is not a best rank-\(k\) error bound. It is a
conditional bound: once the columns \(C=A[:,J]\) have been selected, the CUR approximation
is controlled by the best approximation of \(A\) using these columns and by the size of the
interpolation coefficients.

This is an interpolation stability statement for the selected columns and rows. It should not be interpreted as a global relative-error CUR guarantee with respect to the best rank-\(k\) approximation. Such guarantees require additional assumptions or different sampling mechanisms.

Let \(S_I\in\mathbb R^{k\times m}\) be the row-selection matrix such that
\[
        S_I A = A[I,:].
\]
Let
\[
        C=A[:,J],\qquad B=A[I,J]=S_I C,\qquad R=A[I,:]=S_I A.
\]
Assume that \(B\) is nonsingular and define
\[
        P = C B^{-1} S_I .
\]
Then the CUR approximation can be written as
\[
        C B^{-1} R = C B^{-1}S_I A = PA.
\]
Moreover,
\[
        PC = C B^{-1}S_I C = C B^{-1}B = C,
\]
so \(P\) interpolates the selected columns.

\begin{proposition}[Interpolation stability]
\label{prop:qcur-interpolation-stability}
Let \(A\in\mathbb H^{m\times n}\), let \(C=A[:,J]\in\mathbb H^{m\times k}\), and
let \(B=A[I,J]\in\mathbb H^{k\times k}\) be nonsingular. Define
\[
        P = C B^{-1} S_I,
        \qquad
        A_{\rm CUR}= C B^{-1} A[I,:]=PA.
\]
Then
\[
        \|A-A_{\rm CUR}\|_F
        \leq
        (1+\|P\|_2)
        \min_{X\in\mathbb H^{k\times n}}
        \|A-CX\|_F .
\]
\end{proposition}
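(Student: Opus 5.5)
Fix any \(X\in\mathbb H^{k\times n}\) and use the interpolation property \(PC=C\) established just before the statement. Then \(PCX=CX\), so
\[
  A-PA=(A-CX)-P(A-CX)=(I-P)(A-CX).
\]
Applying the triangle inequality gives
\[
  \|A-A_{\rm CUR}\|_F\le \|A-CX\|_F+\|P(A-CX)\|_F .
\]
The claim then follows from the submultiplicative bound \(\|PE\|_F\le\|P\|_2\|E\|_F\) with \(E=A-CX\), followed by taking the minimum over \(X\). The minimum is attained because the right-hand side is a continuous function on a finite-dimensional real space and is coercive in the component of \(X\) that matters; an infimum works equally well.

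The only point needing care over \(\Hq\) is the mixed-norm inequality \(\|PE\|_F\le\|P\|_2\|E\|_F\). I would prove it column by column: \(\|PE\|_F^2=\sum_j\|Pe_j^{E}\|_2^2\), where \(e^E_j\) is the \(j\)th column of \(E\). It therefore suffices that \(\|Pv\|_2\le\|P\|_2\|v\|_2\) for quaternion vectors \(v\), with \(\|P\|_2\) the largest quaternion singular value. This follows from the quaternion SVD \(P=U\Sigma V^*\): unitary quaternion matrices preserve the Euclidean norm, since \(\|Uw\|_2^2=w^*U^*Uw=w^*w\), and \(\Sigma\) is real diagonal, so \(\|\Sigma w\|_2\le\sigma_1\|w\|_2\). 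Alternatively, one can pass through \(\Phi\). Under \(\Phi\), Frobenius norms scale by \(\sqrt2\), the singular values of \(\Phi(P)\) are those of \(P\) each repeated twice, and \(\Phi\) is multiplicative, so the complex inequality transfers directly.

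I expect the main obstacle to be justifying these norm facts cleanly in the quaternion setting, in particular that \(\|P\|_2\) equals the largest quaternion singular value and is the operator norm. The algebraic identity itself is immediate once \(PC=C\) is used. I would also remark that \(\|P\|_2\le\|CB^{-1}\|_2\), because \(S_I\) has orthonormal rows. By quasi-dominance this is at most \((1+\varepsilon)\sqrt{mk}\), which links the bound to Proposition~\ref{prop:termination-dominance}.
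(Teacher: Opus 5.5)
Your proof is correct and follows essentially the same route as the paper: both use $PC=C$ to write $A-PA=(I-P)(A-CX)$ for an arbitrary $X$ and then minimize over $X$, with the only difference being that the paper applies $\|(I-P)E\|_F\le\|I-P\|_2\|E\|_F$ and then $\|I-P\|_2\le 1+\|P\|_2$, while you apply the triangle inequality in the Frobenius norm first, which is an immaterial reordering. Your justification of $\|PE\|_F\le\|P\|_2\|E\|_F$ over $\mathbb H$, via the quaternion SVD or via $\Phi$, fills in a step the paper takes for granted.
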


\begin{proof}
Let \(X\in\mathbb H^{k\times n}\) be arbitrary. Since \(PC=C\), we have
\[
        (I-P)CX=0.
\]
Therefore
\[
        A-A_{\rm CUR}
        =
        A-PA
        =
        (I-P)A
        =
        (I-P)(A-CX).
\]
Using the submultiplicativity of the spectral norm with the Frobenius norm gives
\[
        \|A-A_{\rm CUR}\|_F
        \leq
        \|I-P\|_2\,\|A-CX\|_F.
\]
Finally,
\[
        \|I-P\|_2\leq 1+\|P\|_2.
\]
Since the inequality holds for every \(X\), taking the minimum over
\(X\in\mathbb H^{k\times n}\) proves the result.
\end{proof}

The proposition shows that the CUR error is small when two things happen: the selected columns
span the matrix well, and the interpolation operator \(P\) is not too large. The second point is
precisely where MaxVol enters.

\begin{corollary}[Stability under quasi-dominance]
\label{cor:qcur-quasidominant-stability}
Assume that \(B=A[I,J]\) is \(\varepsilon\)-quasi-dominant. Then
\[
        \|A-A_{\rm CUR}\|_F
        \leq
        \left(1+(1+\varepsilon)\sqrt{mk}\right)
        \min_{X\in\mathbb H^{k\times n}}
        \|A-CX\|_F .
\]
\end{corollary}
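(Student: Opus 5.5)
The plan is to combine Proposition~\ref{prop:qcur-interpolation-stability} with a bound on \(\|P\|_2\) in terms of the entries of \(C_B\). By that proposition it suffices to show
\[
        \|P\|_2\leq (1+\varepsilon)\sqrt{mk}.
\]
The first step is to note that \(P=CB^{-1}S_I=C_B S_I\), where \(C_B=A[:,J]B^{-1}\in\Hq^{m\times k}\) is the coefficient matrix of Definition~\ref{def:q-dominant}. The row-selection matrix \(S_I\in\R^{k\times m}\) has orthonormal rows, so \(S_I S_I^*=I_k\) and \(\|S_I\|_2=1\).

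Next, by submultiplicativity of the quaternion spectral norm, \(\|P\|_2\le\|C_B\|_2\,\|S_I\|_2=\|C_B\|_2\). This is valid over \(\Hq\) because the spectral norm equals the largest quaternion singular value. Equivalently, through the embedding \(\Phi\), which is multiplicative and preserves singular values (each one duplicated), it equals the complex spectral norm of \(\Phi(\cdot)\).

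Then bound the spectral norm by the Frobenius norm: \(\|C_B\|_2\le\|C_B\|_F=\bigl(\sum_{i,j}|(C_B)_{ij}|^2\bigr)^{1/2}\). Quasi-dominance gives \(|(C_B)_{ij}|\le 1+\varepsilon\) for all \(mk\) entries. Hence \(\|C_B\|_F\le(1+\varepsilon)\sqrt{mk}\), and substituting into Proposition~\ref{prop:qcur-interpolation-stability} yields the claim. Only the bound on \(C_B\) is needed; the bound on \(R_B\) plays no role here.

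The only point needing care is the justification of \(\|\cdot\|_2\le\|\cdot\|_F\) and of submultiplicativity for quaternion matrices. I would handle both via \(\Phi\). Writing \(M=X+Yj\), one has \(\|\Phi(M)\|_F^2=2(\|X\|_F^2+\|Y\|_F^2)=2\|M\|_F^2\), since \(|x+yj|^2=|x|^2+|y|^2\). Also \(\|\Phi(M)\|_2=\|M\|_2\) by the QSVD, since \(\Phi(U)\) is unitary for quaternion unitary \(U\). The complex inequality \(\|\Phi(M)\|_2\le\|\Phi(M)\|_F\) then gives \(\|M\|_2\le\sqrt2\,\|M\|_F\), which is too weak. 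So instead I would use the direct argument \(\|M\|_2^2=\sigma_1^2\le\sum_i\sigma_i^2=\operatorname{tr}(M^*M)\). That sum equals \(\|M\|_F^2\), as seen from the real part of the trace of \(M^*M\), which is invariant under the unitary factors of the QSVD.
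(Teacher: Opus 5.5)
Your proposal is correct and matches the paper's proof: you write $P=C_BS_I$, use $\|S_I\|_2=1$ and submultiplicativity to get $\|P\|_2\le\|C_B\|_2\le\|C_B\|_F\le(1+\varepsilon)\sqrt{mk}$, and substitute into Proposition~\ref{prop:qcur-interpolation-stability}. Your justification of submultiplicativity via $\Phi$, and of $\|M\|_2\le\|M\|_F$ via $\|M\|_F^2=\sum_i\sigma_i^2$ (using $\operatorname{Re}\operatorname{tr}(XY)=\operatorname{Re}\operatorname{tr}(YX)$ over $\Hq$), is sound and fills in steps the paper leaves implicit.
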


\begin{proof}
By definition,
\[
        P=C B^{-1}S_I.
\]
Since \(S_I\) is a row-selection matrix, \(\|S_I\|_2=1\). Hence
\[
        \|P\|_2
        \leq
        \|CB^{-1}\|_2.
\]
If \(B\) is \(\varepsilon\)-quasi-dominant, then all entries of \(CB^{-1}\) have modulus at most
\(1+\varepsilon\). Therefore
\[
        \|CB^{-1}\|_2
        \leq
        \|CB^{-1}\|_F
        \leq
        (1+\varepsilon)\sqrt{mk}.
\]
Substituting this estimate in Proposition~\ref{prop:qcur-interpolation-stability} gives the
claim.
\end{proof}

This corollary is intentionally simple. The constant is not sharp, but the message is useful:
quasi-dominance controls the stability of the interpolation operator. To obtain a bound directly
in terms of the best rank-\(k\) approximation error, one would also need to control how well the
selected columns \(C=A[:,J]\) approximate the column space of \(A\). This is a stronger classical
MaxVol-type question and is left for future work.

\color{black}

\subsection{Append-row pseudoinverse update for RectMaxVol}

We now prove the append-row pseudoinverse update used by RectMaxVol.

\begin{proposition}[Quaternion append-row pseudoinverse update]
\label{prop:append_row_update}
Let $B\in\Hq^{r\times k}$ have full column rank and let
\[
    B_{\rm new}=\begin{bmatrix}B\\a^*\end{bmatrix},
    \qquad a\in\Hq^{k}.
\]
Let $G=B^*B$, $c=G^{-1}a$, $d=1+a^*c$, and $s=Bc$. Then $d\in\R_{>0}$ and
\[
    B_{\rm new}^\pinv
    =\begin{bmatrix}
        B^\pinv-cd^{-1}s^* & cd^{-1}
      \end{bmatrix}.
\]
\end{proposition}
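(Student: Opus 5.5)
We first show that $d$ is a positive real number. Then we express $B_{\rm new}^\pinv$ through the normal equations and invert the updated Gram matrix with the quaternion Sherman--Morrison formula~\eqref{eq:quat_SM}. Finally we simplify the two resulting blocks, keeping the multiplication order throughout.

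\emph{Step 1: $d\in\R_{>0}$.} Since $B$ has full column rank, $G=B^*B$ is Hermitian. Also $x^*Gx=\|Bx\|^2>0$ for $x\neq0$, so $G$ is positive definite and therefore invertible. One can see this through $\Phi$: $\Phi(G)=\Phi(B)^*\Phi(B)$ is Hermitian positive definite, and Lemma~\ref{lem:inverse_embedding} then gives $G^{-1}$. Moreover $G^{-1}$ is Hermitian, since $(G^{-1})^*G^*=I$ and $G^*=G$. Hence
\[
a^*c=a^*G^{-1}a=(G^{-1}a)^*G(G^{-1}a)=c^*Gc=\|Bc\|^2\ge 0 .
\]
Here we use that the quaternion quadratic form $x^*Gx$ of a Hermitian matrix is real, because it equals its own conjugate. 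It follows that $d=1+\|s\|^2\ge1$ is a positive real number. In particular $d$ commutes with every quaternion matrix, so $cd^{-1}=d^{-1}c$.

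\emph{Step 2: Gram update.} $B_{\rm new}$ still has full column rank, so $B_{\rm new}^\pinv=G_{\rm new}^{-1}B_{\rm new}^*$. Here $G_{\rm new}=B^*B+aa^*=G+aa^*$ and $B_{\rm new}^*=[\,B^*\ \ a\,]$. This is the only place where we use the identification $B^\pinv=(B^*B)^{-1}B^*$ for full column rank. It is the standard characterization via the Penrose equations, which one can check directly over $\Hq$ or transfer through $\Phi$. Applying~\eqref{eq:quat_SM} with $A=G$, $u=v=a$, and the scalar $1+a^*G^{-1}a=d\neq0$ gives
\[
G_{\rm new}^{-1}=G^{-1}-G^{-1}a\,d^{-1}a^*G^{-1}=G^{-1}-c\,d^{-1}c^*,
\]
where we used $a^*G^{-1}=(G^{-1}a)^*=c^*$ because $G^{-1}$ is Hermitian.

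\emph{Step 3: blockwise simplification.} The first block is
\[
G_{\rm new}^{-1}B^*=G^{-1}B^*-cd^{-1}c^*B^*=B^\pinv-cd^{-1}(Bc)^*=B^\pinv-cd^{-1}s^*.
\]
The second block is
\[
G_{\rm new}^{-1}a=c-cd^{-1}(c^*a)=c\,d^{-1}\bigl(d-a^*c\bigr)=cd^{-1},
\]
using $c^*a=(a^*c)^*=a^*c$, which is real. Concatenating the two blocks gives the claimed formula.

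The main obstacle is the noncommutativity bookkeeping in Steps 1 and 3, namely justifying that $a^*c$ is real and equal to $c^*a$. Once Step 1 is established, every scalar that appears is real and can be moved freely. The remaining step is routine.
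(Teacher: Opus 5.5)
Your proof is correct and follows the same route as the paper: you show $d$ is positive real via Hermitian positive definiteness, update $(G+aa^*)^{-1}$ with the quaternion Sherman--Morrison formula, and simplify $(G+aa^*)^{-1}[\,B^*\ \ a\,]$ block by block. Your identity $a^*c=c^*Gc=\|s\|^2$, giving $d=1+\|s\|^2$, is a slightly more explicit version of the paper's appeal to the positive definiteness of $G^{-1}$.
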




\begin{proof}
Since \(B\) has full column rank, \(G=B^*B\) is Hermitian positive definite.
Thus \(G^{-1}\) is Hermitian positive definite, and
\[
    a^*G^{-1}a\in\mathbb R_{\ge 0}.
\]
Hence
\[
    d=1+a^*G^{-1}a=1+a^*c\in\mathbb R_{>0}.
\]
In particular, \(d\) commutes with all quaternionic quantities.

We have
\[
    B_{\rm new}^*B_{\rm new}=B^*B+aa^*=G+aa^*.
\]
By the quaternion Woodbury identity,
\[
    (G+aa^*)^{-1}
    =
    G^{-1}-G^{-1}a(1+a^*G^{-1}a)^{-1}a^*G^{-1}.
\]
Using \(c=G^{-1}a\) and \(d=1+a^*G^{-1}a\), this becomes
\[
    (G+aa^*)^{-1}
    =
    G^{-1}-cd^{-1}a^*G^{-1}.
\]
Now
\[
    B_{\rm new}^\dagger
    =
    (B_{\rm new}^*B_{\rm new})^{-1}B_{\rm new}^*
    =
    (G+aa^*)^{-1}
    \begin{bmatrix}
        B^* & a
    \end{bmatrix}.
\]
We compute the two blocks separately. For the first block,
\[
\begin{aligned}
    (G+aa^*)^{-1}B^*
    &=
    G^{-1}B^* - cd^{-1}a^*G^{-1}B^*  \\
    &=
    B^\dagger - cd^{-1}(B c)^*        \\
    &=
    B^\dagger - cd^{-1}s^* .
\end{aligned}
\]
For the second block,
\[
\begin{aligned}
    (G+aa^*)^{-1}a
    &=
    G^{-1}a - cd^{-1}a^*G^{-1}a  \\
    &=
    c - cd^{-1}(a^*c)             \\
    &=
    c\left(1-d^{-1}(d-1)\right)   \\
    &=
    cd^{-1}.
\end{aligned}
\]
Therefore
\[
    B_{\rm new}^\dagger
    =
    \begin{bmatrix}
        B^\dagger-cd^{-1}s^* & cd^{-1}
    \end{bmatrix}.
\]
This proves the formula.
\end{proof}

\subsection{RectMaxVol and right preconditioning}
\label{subsec:rectmaxvol-preconditioning-theory}

We now explain why RectMaxVol gives a natural right preconditioner for overdetermined
quaternion least-squares problems. This is the mechanism used in the numerical experiments.
The argument parallels the real/complex RectMaxVol preconditioning mechanism, but it is stated for right quaternion least-squares preconditioning. The QR factorization, pseudoinverse, and row-norm identities are used in the quaternion sense.

Let
\[
        A\in\mathbb H^{m\times n},\qquad m\geq n,
\]
be full column rank. We consider
\[
        \min_{x\in\mathbb H^n}\|Ax-b\|_2.
\]
Assume that RectMaxVol selects a row set \(I\) and define
\[
        S=A[I,:]\in\mathbb H^{r\times n},\qquad r\geq n.
\]
We assume that \(S\) has full column rank. Let
\[
        S=QR,\qquad Q^*Q=I_n,
\]
be a reduced quaternion QR factorization of \(S\). We use the right preconditioner
\[
        T=R^{-1}.
\]
The preconditioned least-squares problem is
\[
        \min_{y\in\mathbb H^n}\|ATy-b\|_2,
        \qquad x=Ty.
\]
The selected rows of the preconditioned matrix are
\[
        A[I,:]T = SR^{-1}=Q,
\]
so they have orthonormal columns. Thus the preconditioned matrix contains a normalized
representative subsystem.

\begin{proposition}[Conditioning bound for RectMaxVol preconditioning]
\label{prop:rectmaxvol-preconditioning-bound}
Let \(A\in\mathbb H^{m\times n}\) be full column rank. Let
\(S=A[I,:]\in\mathbb H^{r\times n}\) be full column rank, and let
\[
        S=QR,\qquad Q^*Q=I_n,
\]
be a reduced quaternion QR factorization. Set \(T=R^{-1}\).

Assume that RectMaxVol returns \(S\) with the coefficient bound
\[
        \|A[i,:]S^\dagger\|_2\leq \tau
        \qquad \text{for all } i\notin I.
\]
Then
\[
        \sigma_{\min}(AT)\geq 1,
\]
and
\[
        \sigma_{\max}(AT)
        \leq
        \sqrt{\,n+(m-r)\tau^2\,}.
\]
In particular, if \(\tau\geq 1\), then
\[
        \kappa_2(AT)
        \leq
        \sqrt{\,n+(m-r)\tau^2\,}
        \leq
        \sqrt m\,\tau.
\]
\end{proposition}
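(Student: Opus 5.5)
The plan is to work with the quadratic form \((AT)^*(AT)\) and split it into the selected and non-selected rows. By permutation invariance of singular values (the permutation is real orthogonal, hence quaternion unitary), we may assume the rows of \(A\) are ordered so that the first \(r\) rows are the selected rows. Then
\[
        (AT)^*(AT)=T^*S^*ST+\sum_{i\notin I}(A[i,:]T)^*(A[i,:]T)=Q^*Q+\sum_{i\notin I}(A[i,:]T)^*(A[i,:]T).
\]
Since \(Q^*Q=I_n\), this Hermitian matrix is \(I_n+E\) with \(E\) positive semidefinite. Singular values of a quaternion matrix are the square roots of the eigenvalues of its Gram matrix. I would justify this through the complex embedding \(\Phi\), where each quaternion singular value appears twice and \(\Phi(M^*)=\Phi(M)^*\). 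Consequently \(\sigma_{\min}(AT)^2=\lambda_{\min}(I+E)\ge 1\) by the Rayleigh quotient, using \(y^*Ey\ge 0\) for every \(y\in\Hq^n\).

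For the upper bound I will use \(\sigma_{\max}(AT)\le\|AT\|_F\) and compute the Frobenius norm row by row. The selected block is \(Q\), so it contributes \(\|Q\|_F^2=\operatorname{tr}(Q^*Q)=n\). For a non-selected row, the key identity is
\[
        A[i,:]T=A[i,:]R^{-1}=A[i,:]S^\dagger Q,
\]
which needs \(S^\dagger=(S^*S)^{-1}S^*=R^{-1}Q^*\). This follows from \(S^*S=R^*R\) together with the two-sided invertibility of \(R\) (Lemma~\ref{lem:inverse_embedding}). Then \(S^\dagger Q=R^{-1}Q^*Q=R^{-1}\). Since \(Q\) has orthonormal columns, \(\|xQ\|_2=\|x\|_2\) for any row vector \(x\), so each non-selected row satisfies \(\|A[i,:]T\|_2=\|A[i,:]S^\dagger\|_2\le\tau\). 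Summing the \(m-r\) rows gives \(\|AT\|_F^2\le n+(m-r)\tau^2\).

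Finally, \(\kappa_2(AT)=\sigma_{\max}/\sigma_{\min}\le\sqrt{n+(m-r)\tau^2}\). If \(\tau\ge1\), then using \(n\le r\) we get \(n+(m-r)\tau^2\le r\tau^2+(m-r)\tau^2=m\tau^2\), which gives the last inequality.

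The main obstacle is ensuring that the real/complex facts I use hold over \(\Hq\) with the correct order of multiplication. These facts are the Rayleigh-quotient characterization of extreme singular values, \(\sigma_{\max}\le\|\cdot\|_F\), and the row-norm preservation \(\|xQ\|_2=\|x\|_2\). I would derive all of them by passing to \(\Phi\). The map \(\Phi\) is multiplicative and respects \(*\), and it doubles both the singular values and the squared Frobenius norm, so each inequality transfers directly back to the quaternion setting.
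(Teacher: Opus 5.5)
Your proof follows the paper's route: the Gram splitting $(AT)^*(AT)=I_n+E^*E$ for the lower bound, $\sigma_{\max}\le\|AT\|_F$ with $\|Q\|_F^2=n$ for the upper bound, and $S^\dagger=R^{-1}Q^*$ to move the RectMaxVol coefficient bound onto the rows of $AT$. However, one step is justified by a false claim. You assert that, because $Q$ has orthonormal columns, $\|xQ\|_2=\|x\|_2$ for every row vector $x$. Here $x\in\Hq^{1\times r}$ and $Q\in\Hq^{r\times n}$, so $\|xQ\|_2^2=xQQ^*x^*$. As soon as $r>n$, which is exactly the rectangular case RectMaxVol produces, $QQ^*$ is an orthogonal projector onto the range of $Q$, not the identity. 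For instance, with $r=2$, $n=1$, $Q=e_1$ and $x=e_2^*$ one gets $xQ=0$. So this row-norm preservation cannot be derived through $\Phi$ or in any other way. The true isometry is right multiplication by $Q^*$ on $\Hq^{1\times n}$, since $\|yQ^*\|_2^2=yQ^*Qy^*=yy^*$.

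The proposition survives with either of two immediate fixes.
\begin{enumerate}
\item Use only the inequality $\|xQ\|_2\le\|x\|_2$. It follows from $xx^*=xQQ^*x^*+x(I-QQ^*)x^*$, where $I-QQ^*$ is Hermitian and idempotent, so the second term is a squared norm and hence nonnegative. This inequality is all you need to conclude $\|A[i,:]T\|_2\le\tau$.
\item Argue in the paper's direction: $A[i,:]S^\dagger=(A[i,:]T)Q^*$, and right multiplication by $Q^*$ preserves the norm of length-$n$ row vectors. This yields $\|A[i,:]S^\dagger\|_2=\|A[i,:]T\|_2$. Your equality is in fact true, because $A[i,:]S^\dagger$ lies in the row space of $Q^*$, but not for the reason you gave.
\end{enumerate}
The rest of your proposal is correct and matches the paper. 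This includes the permutation reduction, the identity $S^*S=R^*R$ combined with two-sided invertibility of $R$, and the final estimate $n+(m-r)\tau^2\le r\tau^2+(m-r)\tau^2=m\tau^2$ using $n\le r$, which you state more explicitly than the paper does.
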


\begin{proof}
Since \(S=QR\) and \(Q^*Q=I_n\), the Moore--Penrose pseudoinverse of \(S\) is
\[
        S^\dagger=R^{-1}Q^*.
\]
Indeed,
\[
        S^\dagger=(S^*S)^{-1}S^*
        =(R^*Q^*QR)^{-1}R^*Q^*
        =(R^*R)^{-1}R^*Q^*
        =R^{-1}Q^*.
\]

Let \(a_i=A[i,:]\) be a row of \(A\). Then
\[
        a_iS^\dagger=a_iR^{-1}Q^*.
\]
Since \(Q^*Q=I_n\), right multiplication by \(Q^*\) preserves the Euclidean norm of
row vectors in \(\mathbb H^{1\times n}\). Hence
\[
        \|a_iS^\dagger\|_2
        =
        \|a_iR^{-1}Q^*\|_2
        =
        \|a_iR^{-1}\|_2
        =
        \|a_iT\|_2.
\]
The RectMaxVol coefficient bound therefore implies
\[
        \|A[i,:]T\|_2\leq \tau
        \qquad \text{for all } i\notin I.
\]

Now reorder the rows of \(AT\) so that the selected rows come first. Then
\[
        AT=
        \begin{bmatrix}
        A[I,:]T\\
        A[I^c,:]T
        \end{bmatrix}
        =
        \begin{bmatrix}
        Q\\
        E
        \end{bmatrix},
\]
where \(E=A[I^c,:]T\). Therefore
\[
        (AT)^*(AT)=Q^*Q+E^*E=I_n+E^*E.
\]
Since \(E^*E\) is Hermitian positive semidefinite, all eigenvalues of
\((AT)^*(AT)\) are not less than one. Hence
\[
        \sigma_{\min}(AT)\geq 1.
\]

For the upper bound, we use the Frobenius norm:
\[
        \|AT\|_F^2
        =
        \|Q\|_F^2+\|E\|_F^2.
\]
Since \(Q^*Q=I_n\), we have \(\|Q\|_F^2=n\). Moreover, every non-selected row of
\(E\) has Euclidean norm at most \(\tau\). Hence
\[
        \|E\|_F^2\leq (m-r)\tau^2.
\]
Thus
\[
        \sigma_{\max}(AT)
        \leq
        \|AT\|_F
        \leq
        \sqrt{\,n+(m-r)\tau^2\,}.
\]
Combining this bound with \(\sigma_{\min}(AT)\geq 1\) gives the condition-number estimate.
If \(\tau\geq 1\), then
\[
        n+(m-r)\tau^2 \leq m\tau^2,
\]
and therefore
\[
        \kappa_2(AT)\leq \sqrt m\,\tau.
\]
\end{proof}

The bound is simple and not meant to be sharp. Its role is to explain the stabilization mechanism.
RectMaxVol selects a representative set of rows. The QR factorization normalizes this selected
subsystem. The RectMaxVol stopping criterion controls the remaining rows of the preconditioned
matrix.

\subsection{On approximate inverses in the implementation}
\label{subsec:approx-inverses}

The monotonicity and dominance results above are stated for exact inverses. In practice, one may
use an inverse computed through the complex embedding, an iterative inverse, or a quaternion
pseudoinverse. It is therefore useful to record a simple perturbation observation.

Let \(B\in\mathbb H^{k\times k}\) be nonsingular and let \(U\) be an approximation of \(B^{-1}\).
Define the exact and computed row coefficient matrices by
\[
        C=A[:,J]B^{-1},
        \qquad
        \widetilde C=A[:,J]U.
\]
Similarly, define
\[
        R=B^{-1}A[I,:],
        \qquad
        \widetilde R=UA[I,:].
\]

\begin{proposition}[Coefficient perturbation under an approximate inverse]
\label{prop:approx-inverse-coefficients}
Assume that
\[
        \|U-B^{-1}\|_2\leq \eta.
\]
Then
\[
        \|\widetilde C-C\|_F
        \leq
        \|A[:,J]\|_F\,\eta,
        \qquad
        \|\widetilde R-R\|_F
        \leq
        \eta\,\|A[I,:]\|_F.
\]
Moreover, for every entry,
\[
        |\widetilde C_{ij}-C_{ij}|
        \leq
        \|A[i,J]\|_2\,\eta,
\]
and
\[
        |\widetilde R_{ij}-R_{ij}|
        \leq
        \eta\,\|A[I,j]\|_2.
\]
\end{proposition}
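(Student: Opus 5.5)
The plan is to write each perturbation as a product with the error matrix $U-B^{-1}$ and then apply the mixed submultiplicativity of the spectral and Frobenius norms. We use it in the quaternion setting exactly as in the proof of Proposition~\ref{prop:qcur-interpolation-stability}. First observe that
\[
    \widetilde C-C=A[:,J]\,(U-B^{-1}),\qquad \widetilde R-R=(U-B^{-1})\,A[I,:].
\]
Both identities follow from distributivity of quaternion matrix multiplication. The order of the factors is kept: the error multiplies on the right for $C$ and on the left for $R$, matching the definitions.

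For the Frobenius bounds we need $\|XY\|_F\le \|X\|_F\|Y\|_2$ and $\|XY\|_F\le\|X\|_2\|Y\|_F$ over $\Hq$. I would justify these row by row and column by column. For a row vector $x\in\Hq^{1\times k}$ one has $\|xY\|_2\le\|x\|_2\|Y\|_2$. Here $\|Y\|_2$ is the largest quaternion singular value, and the inequality follows from the quaternion SVD $Y=U\Sigma V^*$: unitary factors preserve Euclidean norms of row vectors, as already used in the proof of Proposition~\ref{prop:rectmaxvol-preconditioning-bound}. Alternatively, one can pass through the complex embedding $\Phi$, which scales Frobenius norms by $\sqrt2$ and preserves spectral norms. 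Summing the squared row norms then gives $\|A[:,J](U-B^{-1})\|_F\le\|A[:,J]\|_F\,\eta$. The column version, with $\|Yx\|_2\le\|Y\|_2\|x\|_2$ for column vectors, gives $\|(U-B^{-1})A[I,:]\|_F\le\eta\,\|A[I,:]\|_F$.

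For the entrywise bounds, write
\[
    \widetilde C_{ij}-C_{ij}=A[i,J]\,(U-B^{-1})e_j .
\]
Its modulus is at most $\|A[i,J](U-B^{-1})\|_2\,\|e_j\|_2$, using the quaternion Cauchy--Schwarz inequality $|x y|\le\|x\|_2\|y\|_2$ for $x\in\Hq^{1\times k}$ and $y\in\Hq^{k}$, which follows from $|x_ly_l|=|x_l||y_l|$. This is in turn at most $\|A[i,J]\|_2\,\eta$ by the row-vector inequality above. Symmetrically,
\[
    |\widetilde R_{ij}-R_{ij}|=|e_i^*(U-B^{-1})A[I,j]|\le\eta\,\|A[I,j]\|_2 .
\]

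The only step needing care is the quaternion version of $\|xY\|_2\le\|x\|_2\|Y\|_2$. It has to hold for left multiplication by row vectors and not only for $Yx$, because noncommutativity separates the two cases. I would handle both at once via $\Phi$: $\Phi(x)$ has two rows, each with Euclidean norm $\|x\|_2$, and $\|\Phi(Y)\|_2=\|Y\|_2$, since the singular values of $\Phi(Y)$ are the quaternion singular values each repeated twice. Everything else is routine.
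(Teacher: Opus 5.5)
Your proposal is correct and follows the paper's proof essentially step by step. Like the paper, you factor out $E=U-B^{-1}$ on the correct side, apply mixed spectral/Frobenius submultiplicativity, and use Cauchy--Schwarz for the entries. The only differences are minor. For the entrywise bound you group the product as $(A[i,J]E)e_j$, where the paper uses $A[i,J](Ee_j)$. You also explicitly justify the quaternion norm inequalities, via the QSVD or $\Phi$, which the paper uses without comment.
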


\begin{proof}
Let
\[
        E=U-B^{-1}.
\]
Then
\[
        \widetilde C-C=A[:,J]E.
\]
Hence
\[
        \|\widetilde C-C\|_F
        =
        \|A[:,J]E\|_F
        \leq
        \|A[:,J]\|_F\|E\|_2
        \leq
        \|A[:,J]\|_F\eta.
\]
Similarly,
\[
        \widetilde R-R=EA[I,:],
\]
and therefore
\[
        \|\widetilde R-R\|_F
        \leq
        \|E\|_2\|A[I,:]\|_F
        \leq
        \eta\|A[I,:]\|_F.
\]

For the entrywise bounds, note that
\[
        \widetilde C_{ij}-C_{ij}
        =
        A[i,J]Ee_j.
\]
Thus, by Cauchy--Schwarz,
\[
        |\widetilde C_{ij}-C_{ij}|
        \leq
        \|A[i,J]\|_2 \|Ee_j\|_2
        \leq
        \|A[i,J]\|_2 \|E\|_2
        \leq
        \|A[i,J]\|_2\eta.
\]
The proof for \(\widetilde R-R\) is the same. We have
\[
        \widetilde R_{ij}-R_{ij}
        =
        e_i^*EA[I,j],
\]
and hence
\[
        |\widetilde R_{ij}-R_{ij}|
        \leq
        \|e_i^*E\|_2\|A[I,j]\|_2
        \leq
        \eta\|A[I,j]\|_2.
\]
This proves the proposition.
\end{proof}

\begin{remark}[Numerical swap decisions]
Suppose that a row swap is accepted because the computed coefficient satisfies
\[
        |\widetilde C_{pq}|>1+\varepsilon.
\]
If
\[
        |\widetilde C_{pq}-C_{pq}|\leq \delta,
\]
then
\[
        |C_{pq}|
        \geq
        |\widetilde C_{pq}|-\delta
        >
        1+\varepsilon-\delta.
\]
Therefore, if \(\varepsilon>\delta\), the exact coefficient still satisfies
\[
        |C_{pq}|>1,
\]
and the exact volume increases. The same argument applies to column swaps through
\(\widetilde R\).
\end{remark}

This observation does not replace the exact monotonicity theorem. It only explains why the exact
theory remains relevant when the inverse is computed with sufficient accuracy. In practice, this
also motivates monitoring the inverse residual or recomputing the inverse when rank-one updates
become unstable.

\color{black}

\section{Numerical experiments}
\label{sec:experiments}

We now illustrate the behavior of the proposed methods.
The purpose is not to compare all possible quaternion low-rank approximation algorithms.
Instead, we focus on three simple questions.
First, does quaternion Greedy MaxVol give useful CUR approximations for RGB images?
Second, can RectMaxVol be used as a preconditioner for ill-conditioned quaternion least-squares systems?
Third, can RectMaxVol select representative rows in real quaternion-valued motion data?

All experiments were run with the number of threads limited to $8$. The image experiments use RGB images encoded as pure quaternion matrices,
\[
    Q = 0 + R i + G j + B k,
\]
where $R$, $G$ and $B$ are the red, green and blue channels. The reconstruction is converted back to RGB by extracting the imaginary components.

The code used to generate the numerical experiments is available at
\href{https://github.com/Kadaverciant/Quaternion-MaxVol-Submatrix-Selection.git}{our GitHub repository}.
All computations are performed with native quaternion arithmetic. In particular,
the basic quaternion linear algebra routines used by the proposed algorithms,
including the operations on quaternion bases and the auxiliary matrix routines,
are provided by the QuatIca package~\cite{leplat2026quaticaadvancednumericallinear}.


\subsection{RGB image approximation}

We first test quaternion CUR approximation on RGB images from the JPEG Happywhale dataset \cite{Dataset}.
The goal is to check whether selected rows and columns can reconstruct the image with increasing accuracy as the core size grows.

For the Greedy MaxVol experiment, we use QR initialization and compare two inverse strategies: the complex embedding inverse and the Newton--Schulz quaternion pseudoinverse.
The selected indices were the same in the tested cases, so the reconstruction errors coincide.
The main difference is the running time.

\begin{table}[ht!]
\centering
\caption{Greedy MaxVol with QR initialization on RGB images. The table reports mean values over the tested images. The Newton--Schulz variant is quaternion-native, while the $\Phi$ variant computes the inverse through the complex embedding.}
\label{tab:rgb_greedy_summary}
\begin{tabular}{cccccc}
\toprule
$k$ & inverse strategy & time (s) & iterations & relF & PSNR \\
\midrule
5  & $\Phi$ + rank-one updates & 0.027 & 7.5  & 0.296 & 17.004 \\
5  & NS + rank-one updates     & 0.036 & 7.5  & 0.296 & 17.004 \\
10 & $\Phi$ + rank-one updates & 0.075 & 14.2 & 0.254 & 18.527 \\
10 & NS + rank-one updates     & 0.104 & 14.2 & 0.254 & 18.527 \\
25 & $\Phi$ + rank-one updates & 0.132 & 28.0 & 0.204 & 20.921 \\
25 & NS + rank-one updates     & 0.177 & 28.0 & 0.204 & 20.921 \\
50 & $\Phi$ + rank-one updates & 0.344 & 37.8 & 0.169 & 23.127 \\
50 & NS + rank-one updates     & 0.550 & 37.8 & 0.169 & 23.127 \\
\bottomrule
\end{tabular}
\end{table}

The expected behavior is observed. Increasing $k$ improves the reconstruction quality.
The relative Frobenius error decreases from $0.296$ at $k=5$ to $0.169$ at $k=50$, while the PSNR increases from $17.004$ dB to $23.127$ dB.
The $\Phi$-based inverse is faster, while the Newton--Schulz version is closer to a quaternion-native implementation.

\begin{figure}[ht!]
\centering
\maybeincludegraphics[width=1\linewidth]{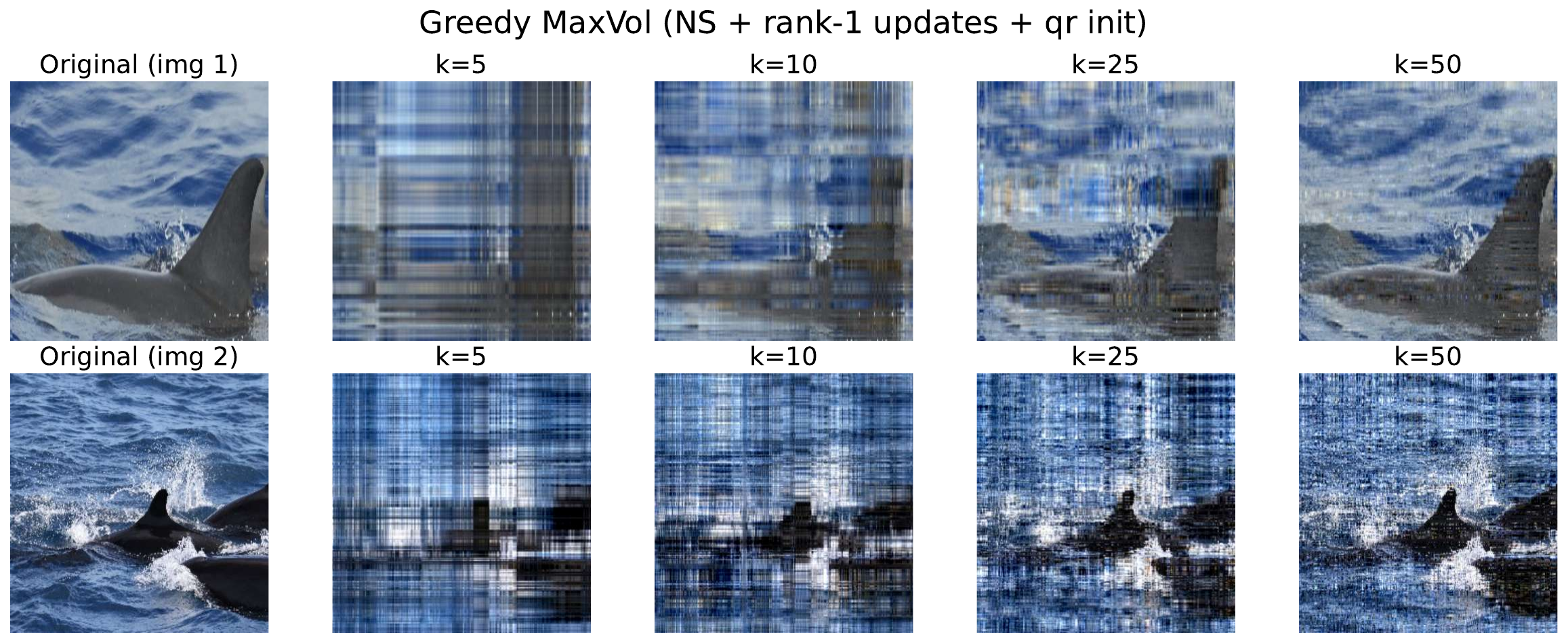}
\caption{Example of RGB reconstruction with quaternion Greedy MaxVol using Newton--Schulz pseudoinverse updates and QR initialization.}
\label{fig:rgb_greedy_demo}
\end{figure}

\begin{figure}[ht!]
\centering
\maybeincludegraphics[width=1\linewidth]{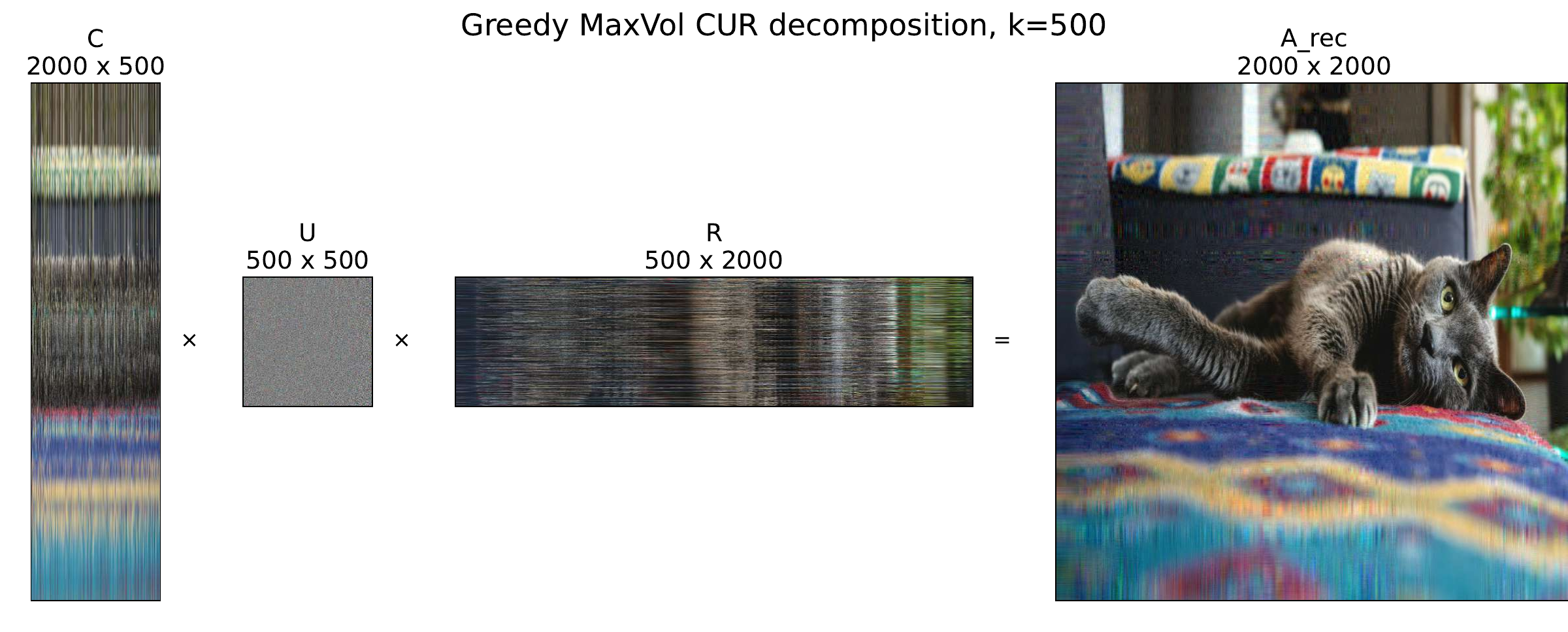}
\caption{Example of quaternion CUR decomposition for high resolution $2000 \times 2000$ RGB image with core size $k=500$.}
\label{fig:rgb_cur_cat_k500}
\end{figure}

\begin{figure}[hbt]
\centering
\maybeincludegraphics[width=1\linewidth]{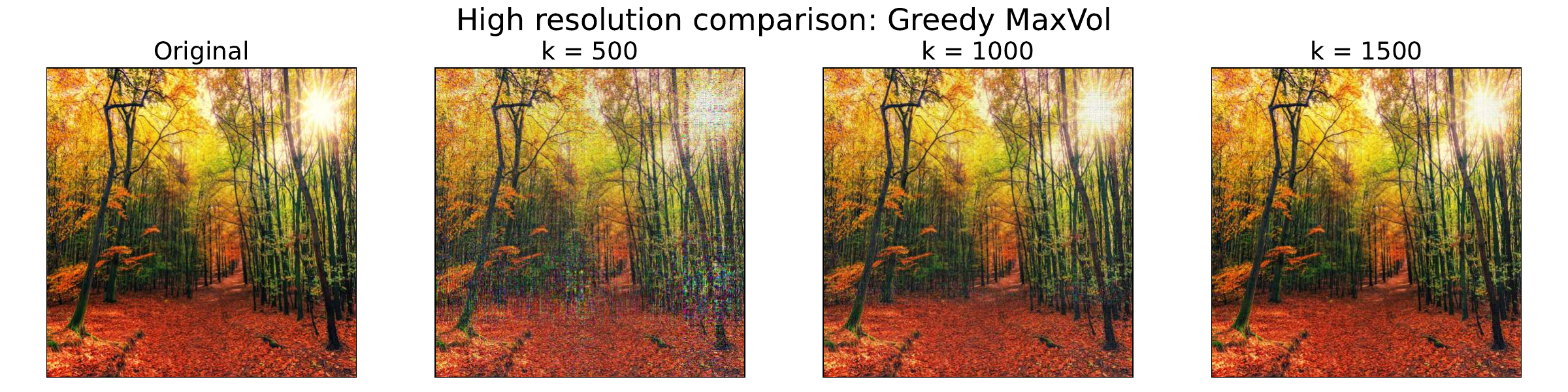}
\caption{Reconstruction demonstration for high resolution $2000 \times 2000$ image and core sizes $k \in [500, 1000, 1500]$.}
\label{fig:final_comparison_greedy_maxvol_demo}
\end{figure}

We also tested a high-resolution image resized to $2000\times 2000$ \cite{HighResImage}.
The reconstruction quality for different values of $k$ is shown in Fig.\ \ref{fig:final_comparison_greedy_maxvol_demo}.
Table~\ref{tab:large_rgb_greedy} reports the results for Greedy MaxVol only. This experiment mainly checks scalability of the basic method.

\begin{table}[ht!]
\centering
\caption{Greedy MaxVol on a $2000\times 2000$ RGB image.}
\label{tab:large_rgb_greedy}
\begin{tabular}{cccccc}
\toprule
$k$ & time (s) & relF & rel2 & MSE RGB & PSNR \\
\midrule
500  & 235.136  & 0.853 & 0.394 & 0.067 & 11.751 \\
1000 & 550.998  & 0.578 & 0.293 & 0.029 & 15.363 \\
1500 & 1177.220 & 0.252 & 0.134 & 0.006 & 22.143 \\
\bottomrule
\end{tabular}
\end{table}

The quality improves significantly with $k$. This is especially visible between $k=1000$ and $k=1500$.
The cost is also substantial, which is expected because the current implementation is not yet optimized for very large images.

\FloatBarrier
\subsubsection{Tensor-CUR comparison}

\begin{figure}[ht!]
\centering
\maybeincludegraphics[width=1\linewidth]{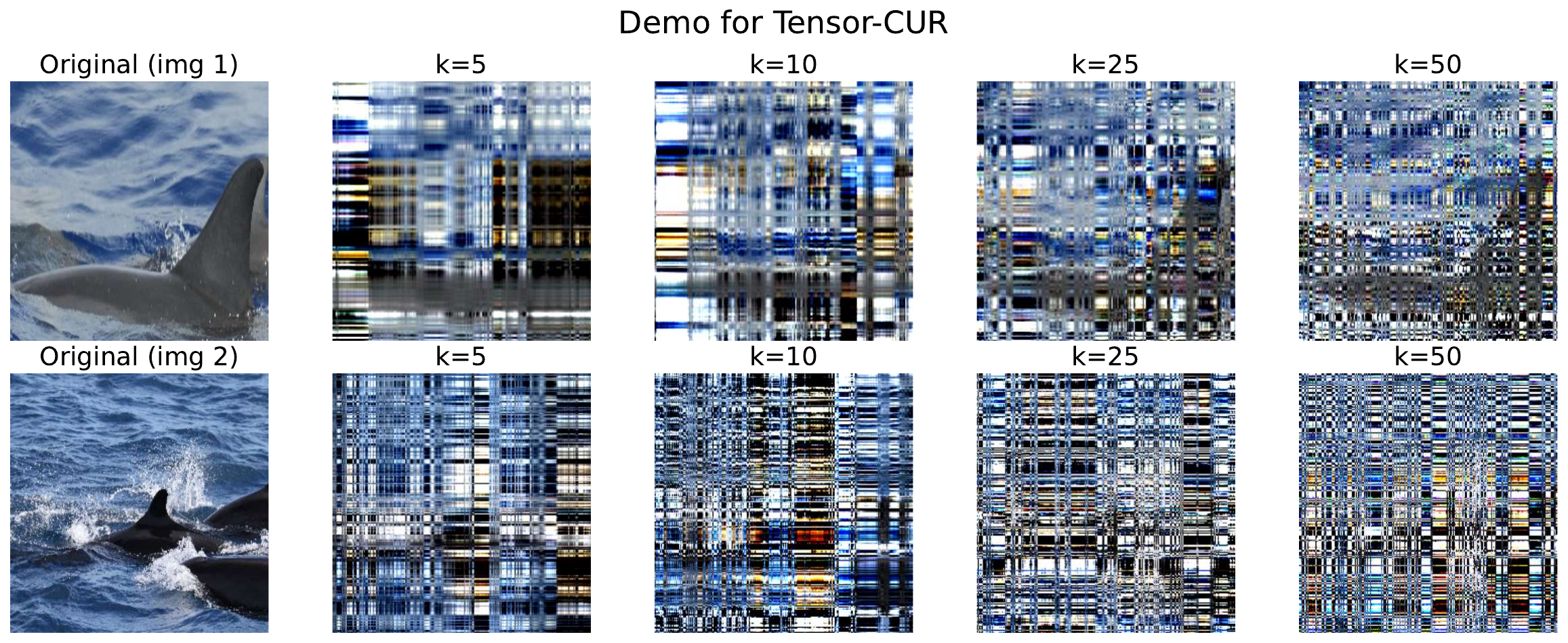}
\caption{Example of RGB reconstruction with Tensor-CUR.}
\label{fig:demo_tensor_cur}
\end{figure}

\begin{figure}[ht!]
\centering
\maybeincludegraphics[width=1\linewidth]{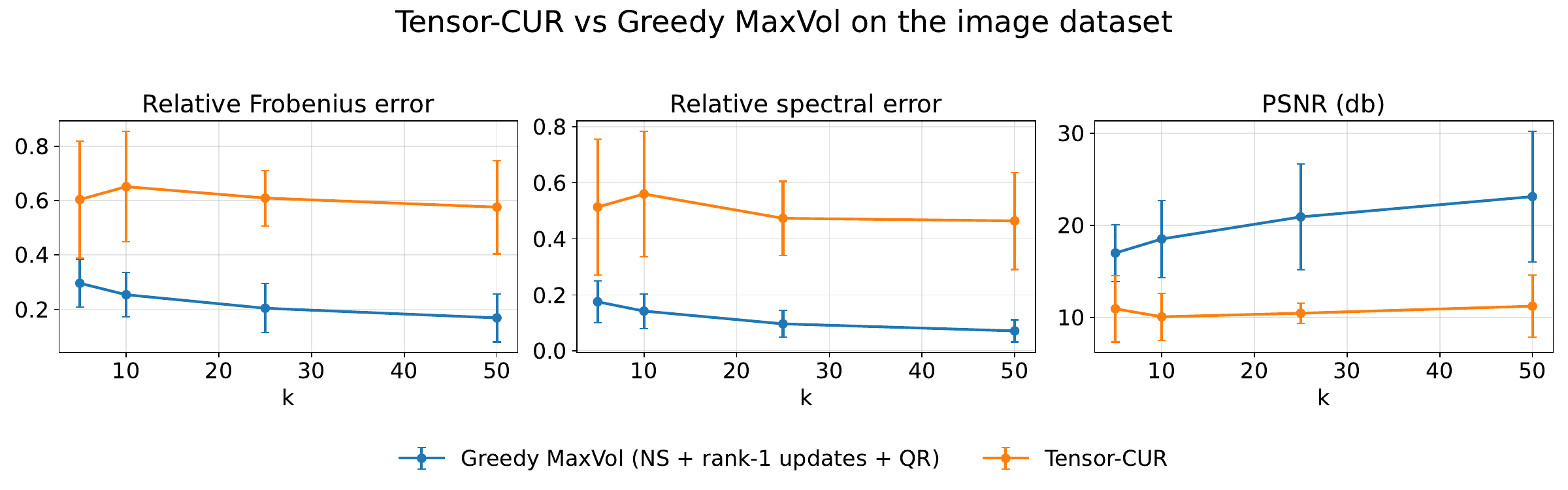}
\caption{Comparison between Greedy MaxVol and Tensor-CUR performance on Happywhale dataset.}
\label{fig:tensor_cur_vs_greedy_maxvol_metrics}
\end{figure}

\begin{figure}[ht!]
\centering
\maybeincludegraphics[width=1\linewidth]{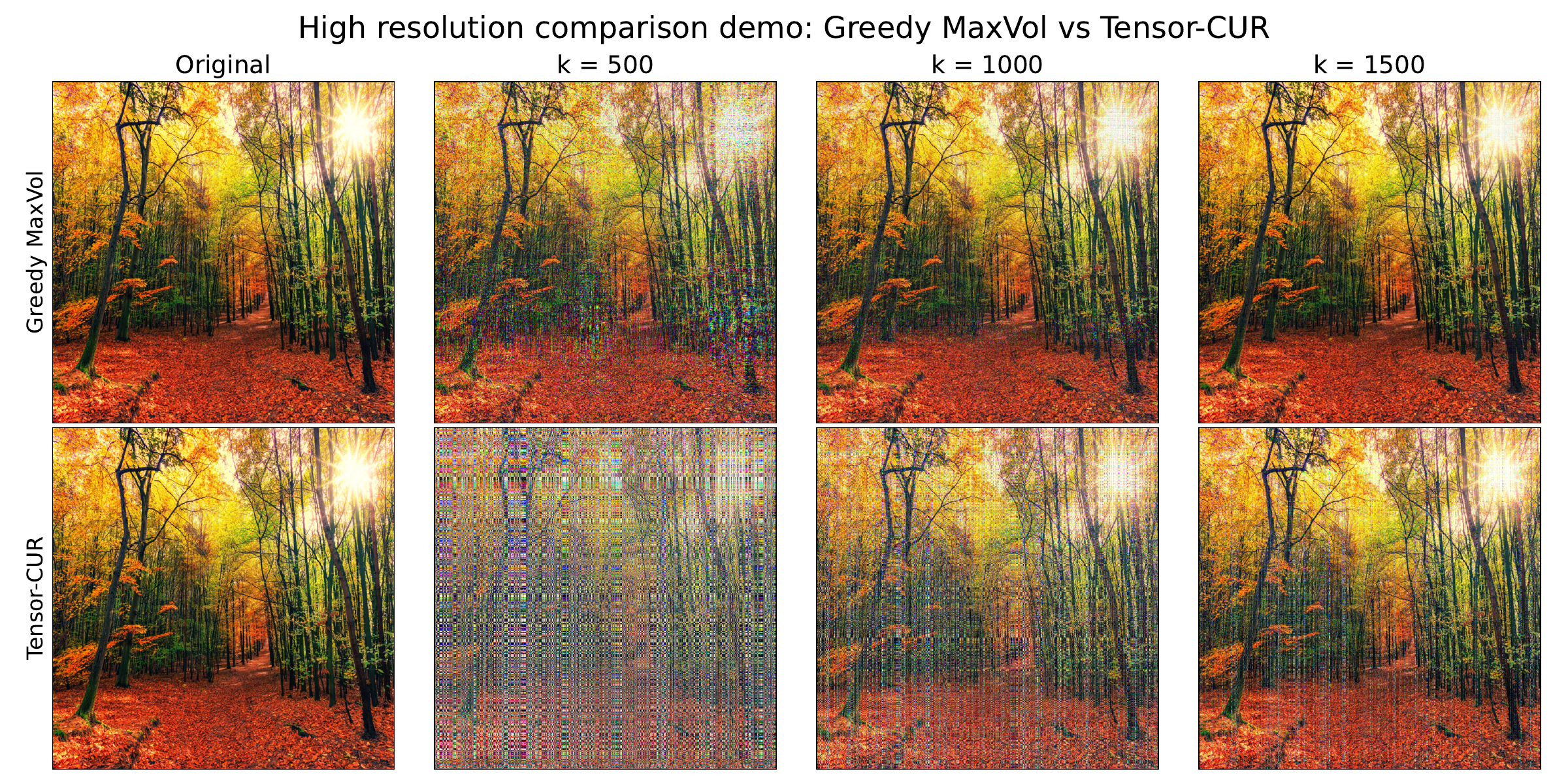}
\caption{Reconstruction demonstration of Greedy MaxVol and Tensor-CUR for high resolution $2000 \times 2000$ image and core sizes $k \in [500, 1000, 1500]$.}
\label{fig:final_comparison_greedy_tensor_demo}
\end{figure}

\begin{figure}[ht!]
\centering
\maybeincludegraphics[width=1\linewidth]{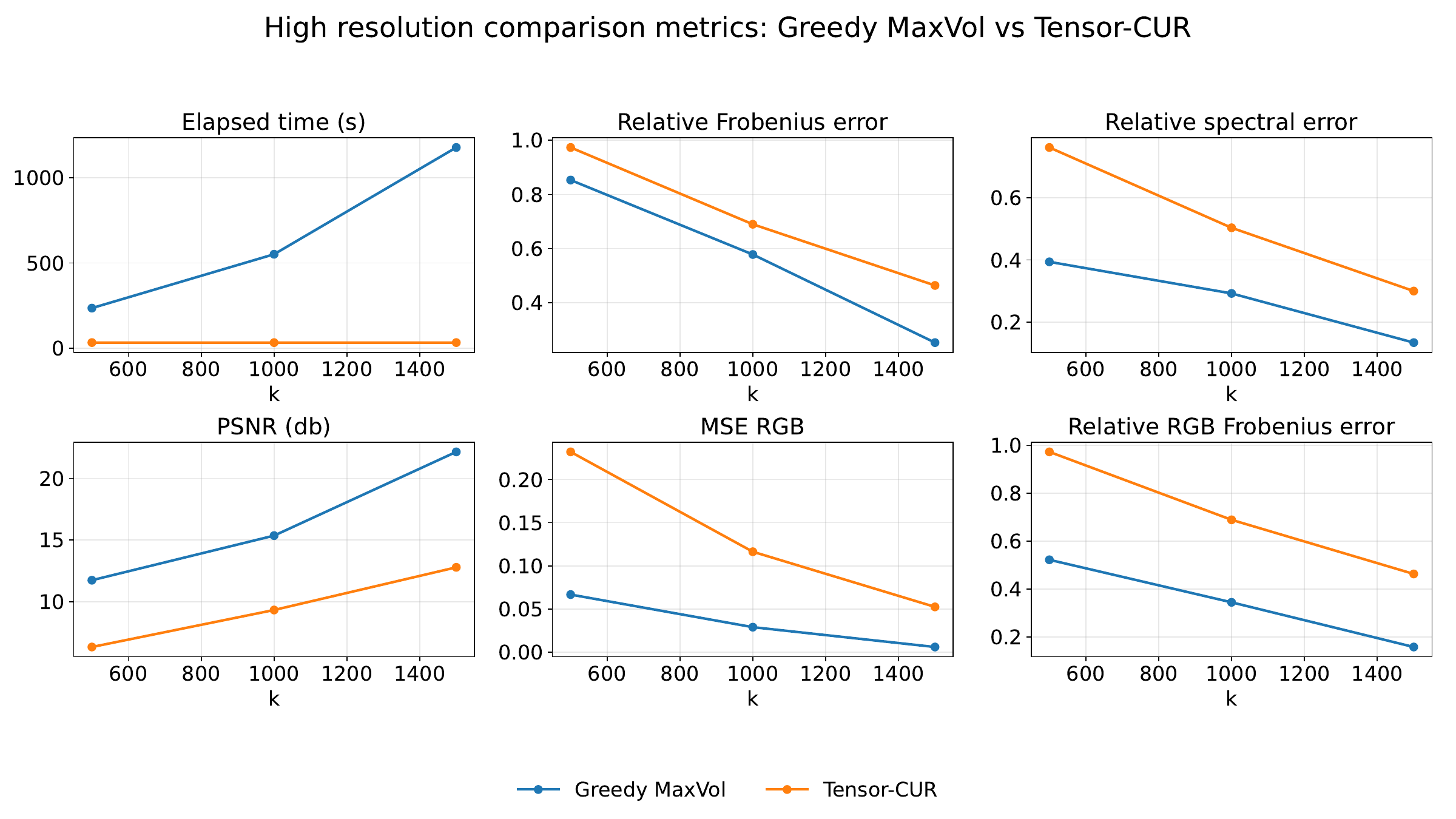}
\caption{Comparison between Greedy MaxVol and Tensor-CUR for high resolution $2000 \times 2000$ image and core sizes $k \in [500, 1000, 1500]$.}
\label{fig:final_comparison_greedy_tensor_metrics}
\end{figure}

As an additional baseline, we also tested the Tensor-CUR sampling idea of Mahoney, Maggioni, and Drineas\ \cite{mahoney2006tensor} on the same RGB data,
using norm-proportional random sampling without replacement, and $c=r=k$.
Greedy MaxVol outperforms Tensor-CUR across all metrics except execution time.
Thus we can state that Tensor-CUR is useful as a very cheap tensor baseline, but the maximum-volume selection
gives substantially better reconstructions in these tests.

\FloatBarrier
\subsection{RectMaxVol-based preconditioning for quaternion least squares}

We next test RectMaxVol as a row-selection tool for preconditioning. Let
\[
    A\in\Hq^{m\times n}, \qquad m\gg n,
\]
be full column rank. We consider
\begin{equation}
\label{eq:quat_ls}
    \min_{x\in\Hq^n}\|Ax-b\|_2.
\end{equation}
For a right preconditioner $T$, we solve
\[
    \min_{y\in\Hq^n}\|ATy-b\|_2,
    \qquad x=Ty.
\]
Square MaxVol selects $n$ rows and uses $T=B^{-1}$. RectMaxVol selects $r\ge n$ rows, computes $S=A[I,:]=QR$, and uses $T=R^{-1}$. In the implementation used here, the iterative least-squares solve is applied to the real representation of the quaternion system, while the row selection and the preconditioners are built from quaternion operations.

We generated synthetic quaternion matrices with prescribed singular values. The
matrices were of sizes \(150\times 10\), \(300\times 20\), and \(600\times 40\).
The target condition number in the reported runs is \(10^6\), with singular
values distributed logarithmically between \(1\) and \(10^{-6}\). For each size
and noise level, we performed \(100\) independent trials. The right-hand side is
\[
    b=Ax_{\rm true}+\eta,
\]
where \(\eta\) is a quaternion Gaussian noise vector. Table~\ref{tab:preconditioning}
reports median values for noise level \(\alpha=10^{-8}\).

\begin{table}[ht!]
\centering
\caption{Synthetic quaternion least-squares preconditioning results for noise level $\alpha=10^{-8}$. Median values over $100$ trials are reported.}
\label{tab:preconditioning}
\resizebox{\linewidth}{!}{%
\begin{tabular}{llccccc}
\toprule
Size & Method & $\kappa_2(A_{\rm prec})$ & Iter. & Rel. residual & Rel. sol. error & Total time (s) \\
\midrule
small & No preconditioner    & $1.000\cdot 10^6$ & 27.0  & $1.947\cdot 10^{-8}$ & $5.042\cdot 10^{-4}$ & 0.005 \\
small & Random rows + QR     & $4.108\cdot 10^0$ & 10.0  & $1.946\cdot 10^{-8}$ & $5.260\cdot 10^{-4}$ & 0.002 \\
small & Norm rows + QR       & $4.803\cdot 10^0$ & 10.0  & $1.951\cdot 10^{-8}$ & $5.214\cdot 10^{-4}$ & 0.002 \\
small & Square MaxVol        & $4.972\cdot 10^0$ & 10.0  & $1.951\cdot 10^{-8}$ & $5.365\cdot 10^{-4}$ & 0.006 \\
small & RectMaxVol + QR      & $2.616\cdot 10^0$ & 10.0  & $1.944\cdot 10^{-8}$ & $5.042\cdot 10^{-4}$ & 0.028 \\
\midrule
medium & No preconditioner   & $1.000\cdot 10^6$ & 115.0 & $2.944\cdot 10^{-7}$ & $3.306\cdot 10^{-2}$ & 0.018 \\
medium & Random rows + QR    & $4.797\cdot 10^0$ & 19.0  & $8.999\cdot 10^{-8}$ & $4.101\cdot 10^{-3}$ & 0.009 \\
medium & Norm rows + QR      & $5.088\cdot 10^0$ & 19.0  & $9.388\cdot 10^{-8}$ & $5.397\cdot 10^{-3}$ & 0.010 \\
medium & Square MaxVol       & $8.401\cdot 10^0$ & 23.0  & $2.349\cdot 10^{-8}$ & $8.654\cdot 10^{-4}$ & 0.018 \\
medium & RectMaxVol + QR     & $3.190\cdot 10^0$ & 17.0  & $7.200\cdot 10^{-8}$ & $3.342\cdot 10^{-3}$ & 0.068 \\
\midrule
large & No preconditioner    & $1.000\cdot 10^6$ & 500.0 & $1.263\cdot 10^{-6}$ & $1.861\cdot 10^{-1}$ & 0.140 \\
large & Random rows + QR     & $5.049\cdot 10^0$ & 26.0  & $1.523\cdot 10^{-7}$ & $6.688\cdot 10^{-3}$ & 0.032 \\
large & Norm rows + QR       & $5.312\cdot 10^0$ & 26.0  & $1.662\cdot 10^{-7}$ & $7.429\cdot 10^{-3}$ & 0.034 \\
large & Square MaxVol        & $1.452\cdot 10^1$ & 38.0  & $3.904\cdot 10^{-7}$ & $1.844\cdot 10^{-2}$ & 0.070 \\
large & RectMaxVol + QR      & $3.638\cdot 10^0$ & 23.0  & $9.168\cdot 10^{-8}$ & $4.358\cdot 10^{-3}$ & 0.282 \\
\bottomrule
\end{tabular}}
\end{table}

The effect of preconditioning is clear. Without preconditioning, the large problem reaches the iteration limit of $500$. With RectMaxVol, the median condition number is reduced to $3.638$, and the median number of iterations decreases to $23$. RectMaxVol is more expensive to set up than the simple baselines, but it gives the best conditioning in these tests.

\begin{figure}[ht!]
\centering
\maybeincludegraphics[width=1\linewidth]{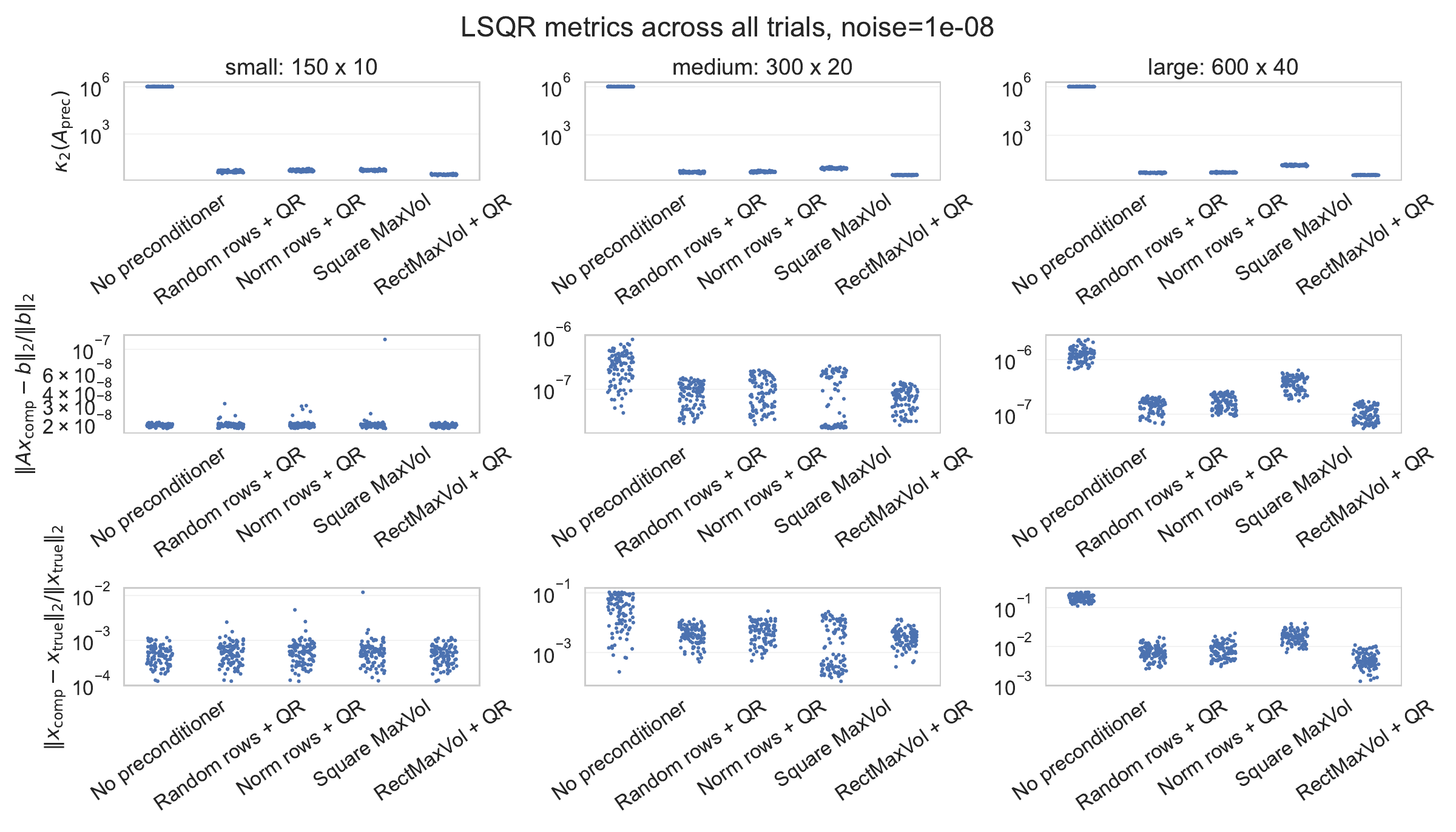}
\caption{Synthetic least-squares metrics for noise level $\alpha=10^{-8}$.}
\label{fig:ls_noise_1e8}
\end{figure}

\subsection{RectMaxVol row selection on motion-capture data}

We now test RectMaxVol on real quaternion-valued data. The data source is the CMU Graphics Lab Motion Capture Database \cite{CMUMocap,re3dataCMUMocap}. We use the BVH conversion mirror \cite{CMUMocapBVH}. For each BVH file, we parse the skeleton and keep $17$ major joints. The three-dimensional position of a joint is encoded as a pure quaternion,
\[
    (x,y,z) \mapsto 0+xi+yj+zk.
\]
For a motion clip with $N$ frames, this gives a quaternion pose matrix $P\in\Hq^{N\times 17}$. Since we want to select important changes in motion, we use frame-to-frame differences,
\[
    A_t=P_{t+1,:}-P_{t,:}, \qquad t=0,\ldots,N-2.
\]
Thus each row of $A\in\Hq^{(N-1)\times 17}$ corresponds to one motion change.

After a method selects row indices $I$, we evaluate how well all motion changes can be reconstructed from the selected rows. The selection itself is quaternion-based for RectMaxVol, while the reconstruction error is evaluated after converting the quaternion change matrix to its complex $\Phi$-representation
\[
    X=\Phi(A)\in\C^{2(N-1)\times 34}.
\]
Each selected quaternion row corresponds to two rows in the complex embedding. If $m=N-1$, we use the embedded row set
\[
    I_{\Phi}=I\cup\{i+m:\; i\in I\}.
\]
With $C_{\Phi}=X[I_{\Phi},:]$, we compute
\[
    W^\star=\argmin_W \|W C_{\Phi}-X\|_F,
    \qquad \widehat X=W^\star C_{\Phi},
\]
using a least-squares solve. The reported error is
\[
    \frac{\|X-\widehat X\|_F}{\|X\|_F}.
\]


We compare RectMaxVol with random rows, norm rows, KMeans centers~\cite{macqueen1967kmeans}, and leverage scores~\cite{drineas2012leverage}. All methods select the same number of rows as RectMaxVol. The random baseline uses one random draw for each processed file. KMeans and leverage scores are computed on the complex embedded feature matrix used for the final reconstruction metric; KMeans is run with $10$ initializations and each selected row is the nearest actual row to a cluster center.

\begin{table}[ht!]
\centering
\caption{MoCap change-frame reconstruction for one representative BVH file.}
\label{tab:mocap_single}
\begin{tabular}{lccc}
\toprule
Method & selected rows & relative Frobenius error & setup time (s) \\
\midrule
RectMaxVol    & 23 & 0.099553 & 0.199998 \\
Random rows   & 23 & 0.165409 & 0.000138 \\
Norm rows     & 23 & 0.124786 & 0.000487 \\
KMeans centers& 23 & 0.118046 & 1.666353 \\
Leverage scores & 23 & 0.123121 & 0.012411 \\
\bottomrule
\end{tabular}
\end{table}

\begin{figure}[t]
\centering
\maybeincludegraphics[width=1\linewidth]{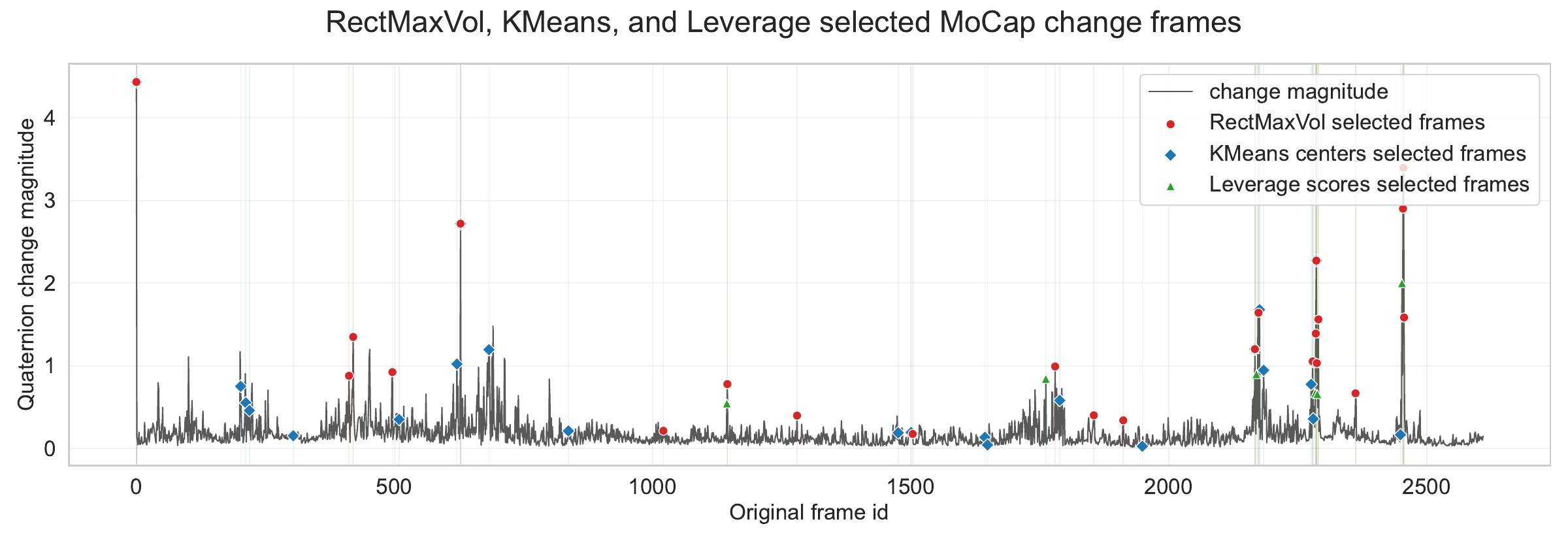}
\caption{Selected motion-change frames for RectMaxVol and the baseline methods on one representative BVH file.}
\label{fig:mocap_selected_frames}
\end{figure}

We also processed $1000$ BVH records. Table~\ref{tab:mocap_all} reports median results. RectMaxVol gives the smallest median reconstruction error, with a moderate advantage over KMeans centers. It is also faster than KMeans in setup time, but slower than norm rows and leverage scores.

\begin{table}[ht!]
\centering
\caption{Median MoCap reconstruction results over $1000$ processed BVH records.}
\label{tab:mocap_all}
\begin{tabular}{lccc}
\toprule
Method & median rows & median error & median setup time (s) \\
\midrule
RectMaxVol      & 21.0 & 0.048421 & 0.182165 \\
Random rows     & 21.0 & 0.153034 & 0.001185 \\
Norm rows       & 21.0 & 0.065842 & 0.001961 \\
KMeans centers  & 21.0 & 0.049633 & 0.357306 \\
Leverage scores & 21.0 & 0.059677 & 0.009400 \\
\bottomrule
\end{tabular}
\end{table}

\begin{figure}[ht!]
\centering
\maybeincludegraphics[width=1\linewidth]{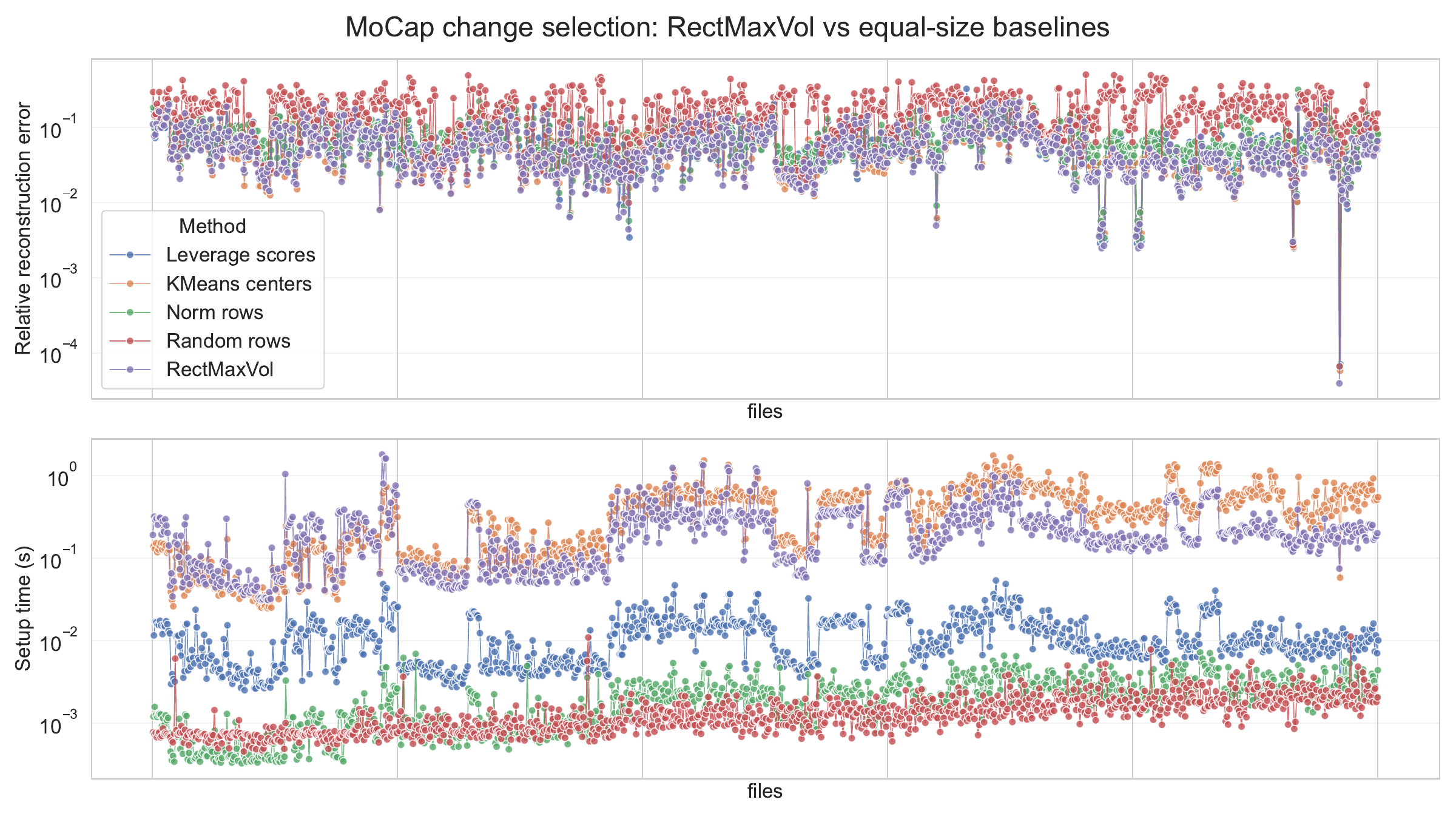}
\caption{MoCap reconstruction error and setup time over $1000$ processed BVH records.}
\label{fig:mocap_metrics}
\end{figure}

\begin{figure}[ht!]
\centering
\maybeincludegraphics[width=1\linewidth]{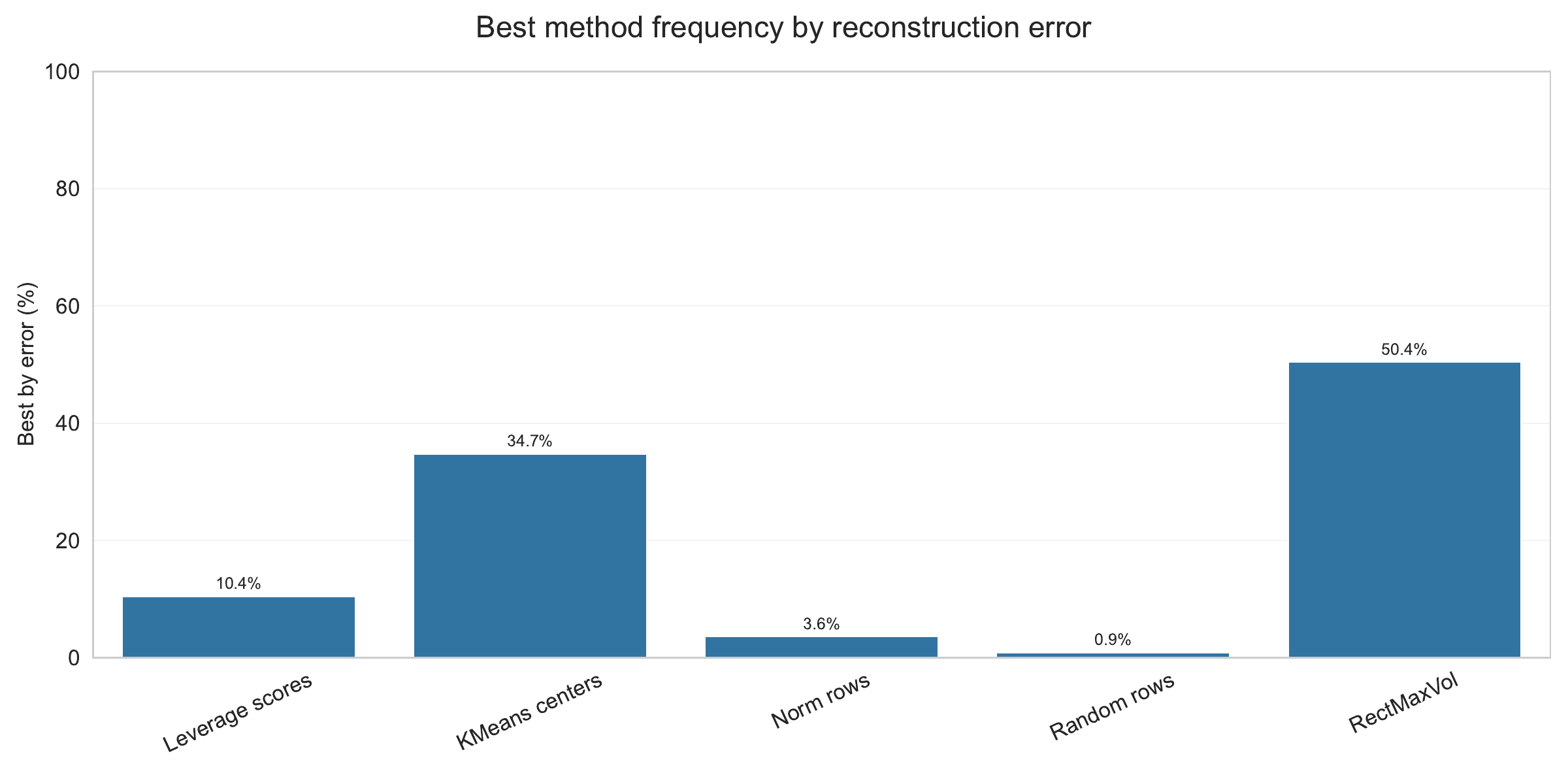}
\caption{Percentage of BVH records where each method gives the best reconstruction error. RectMaxVol is the best method for $50.4\%$ of the processed records.}
\label{fig:mocap_best_percent}
\end{figure}

Fig.~\ref{fig:mocap_best_percent} summarizes how often each method gives the best reconstruction error.
RectMaxVol is the best method for $50.4\%$ of the processed records.

\begin{figure}[ht!]
\centering
\maybeincludegraphics[width=1\linewidth]{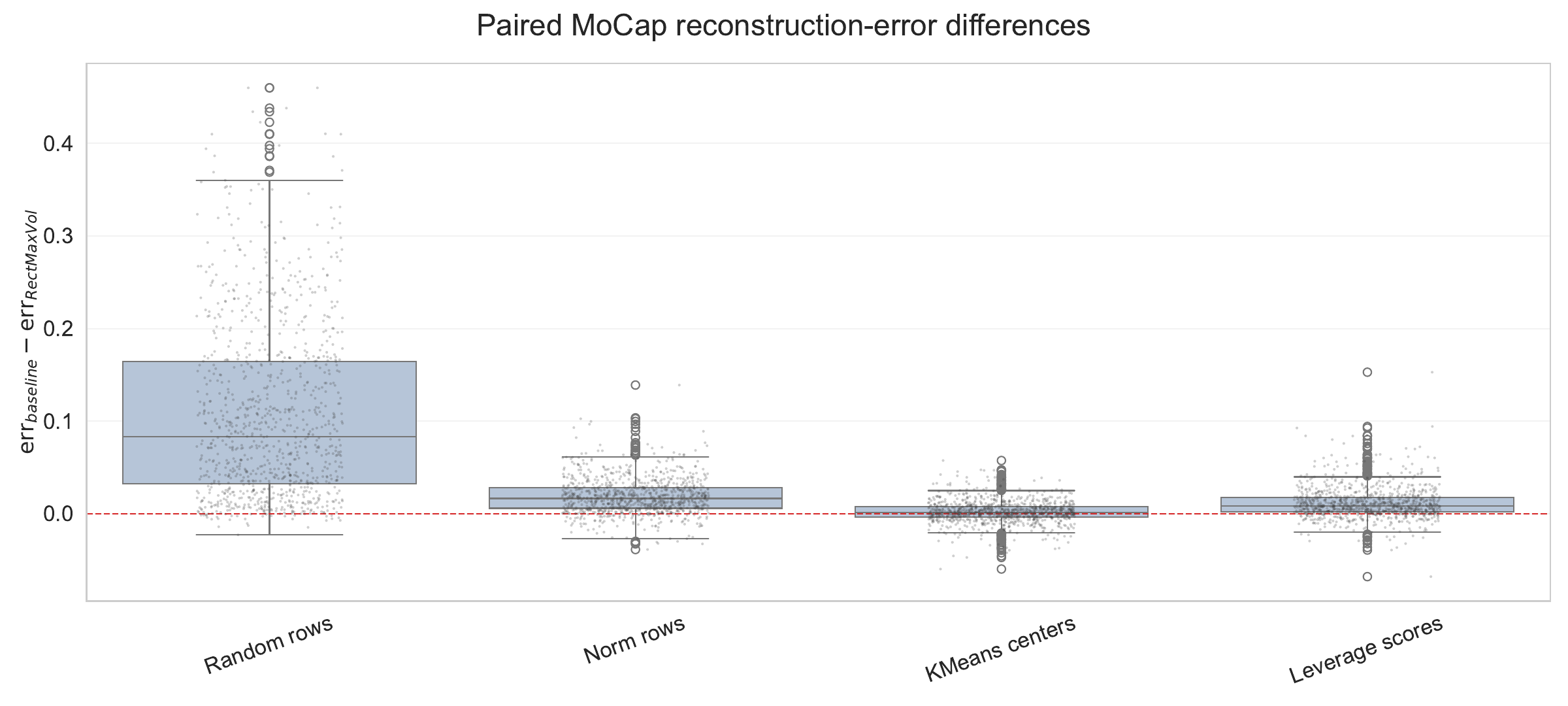}
\caption{Paired differences between each baseline reconstruction error and the RectMaxVol reconstruction error over $1000$ processed BVH records. Positive values mean that RectMaxVol gives a smaller error for the same BVH file.}
\label{fig:mocap_paired_differences}
\end{figure}

Fig.~\ref{fig:mocap_paired_differences} provides a paired comparison for the same records by plotting $\mathrm{err}_{\mathrm{baseline}}-\mathrm{err}_{\mathrm{RectMaxVol}}$ for each baseline.
The median paired difference is positive for all baselines: $0.001433$ for KMeans centers, $0.008638$ for leverage scores, $0.016495$ for norm rows, and $0.083408$ for random rows.
Thus, RectMaxVol improves over KMeans only moderately in the median paired error, while the gaps to leverage scores, norm rows, and random rows are larger.

\begin{figure}[ht!]
\centering
\maybeincludegraphics[width=1\linewidth]{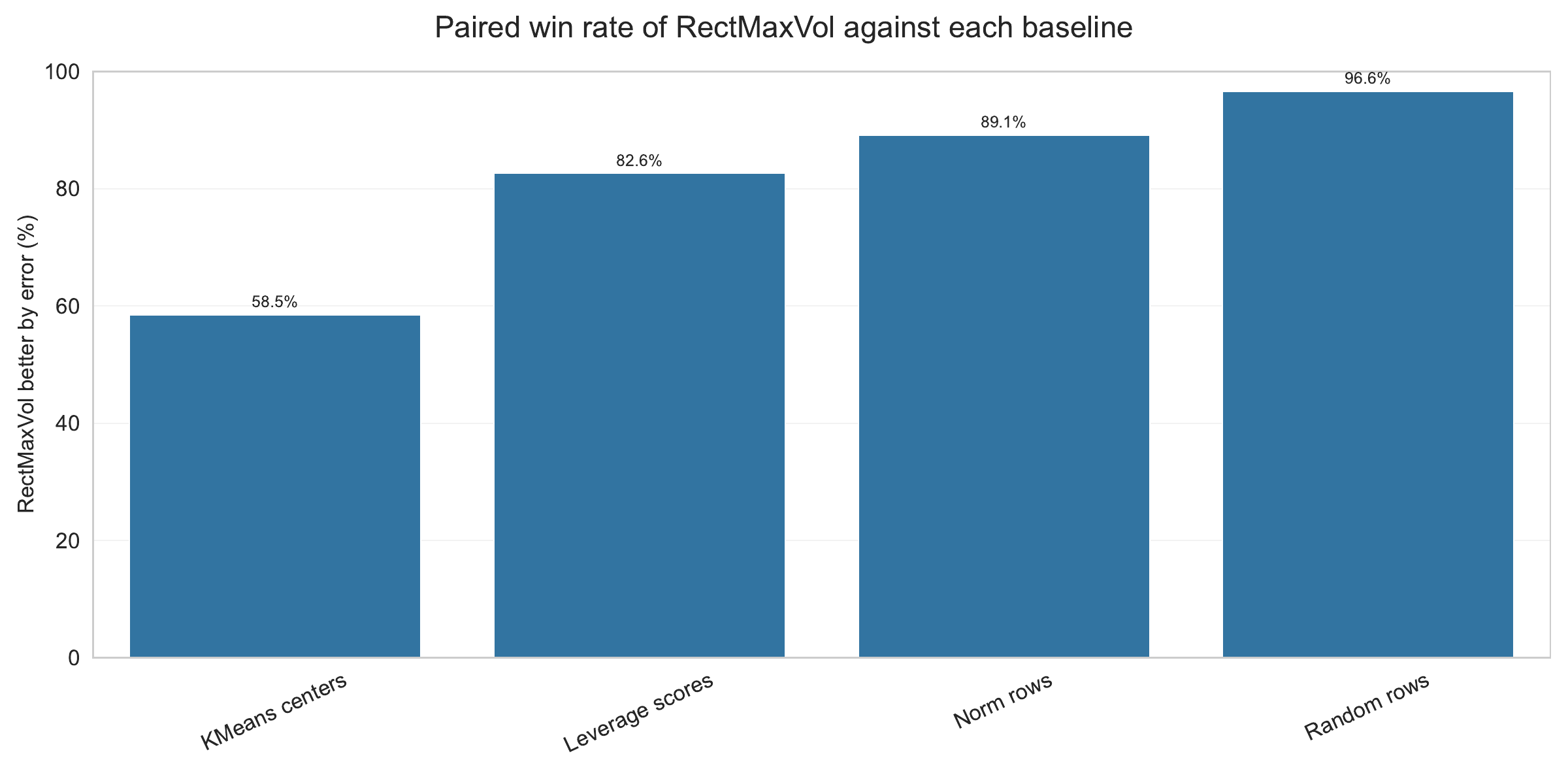}
\caption{Percentage of BVH records where RectMaxVol gives a smaller reconstruction error than each baseline method in the paired comparison.}
\label{fig:mocap_paired_win_rate}
\end{figure}

Fig.~\ref{fig:mocap_paired_win_rate} shows the corresponding paired win rates:
RectMaxVol gives a smaller reconstruction error than KMeans centers for $58.5\%$ of the records, than leverage scores for $82.6\%$, than norm rows for $89.1\%$, and than random rows for $96.6\%$.

Overall, these experiments show that RectMaxVol can be used as a practical row-selection method for quaternion motion data.
It selects a small number of informative motion-change frames and gives the best median reconstruction quality among the tested methods.
The main cost of this improvement is the setup time, because RectMaxVol performs an iterative row-selection procedure.

\newpage
\section{Conclusion and perspectives}
\label{sec:conclusion}

We studied maximum-volume submatrix selection for quaternion matrices. The main point was to adapt the classical MaxVol and RectMaxVol ideas while keeping track of quaternion noncommutativity. We defined the quaternion volume using singular values and the Study determinant, derived the rank-one update formulas needed for row and column replacements, and proved that successful Greedy MaxVol swaps increase the quaternion volume when exact inverses are used. We also connected the stopping criterion with a quasi-dominance property, proved an exact quaternion CUR identity in the full-rank case, and derived an interpolation stability bound for the associated CUR approximation. For the rectangular case, we derived an append-row pseudoinverse update and showed how RectMaxVol leads to a natural right preconditioner for overdetermined quaternion least-squares problems.

The numerical experiments support this theoretical picture. Greedy MaxVol gives stable quaternion CUR approximations of RGB images, and the quality improves as the selected core grows. RectMaxVol gives strong preconditioners for ill-conditioned quaternion least-squares systems, reducing both the condition number and the number of iterations. It also provides an effective row-selection method for quaternion motion-capture data, where it selects representative motion changes and compares favorably with standard baselines.

Several directions seem promising. A first one is to combine quaternion MaxVol selection with
randomized quaternion low-rank approximation. In this setting, sketching could first build a
small quaternion basis, while MaxVol would then extract actual rows and columns, leading to
fast and interpretable CUR approximations. A second direction is to use MaxVol ideas inside
structured constrained models. For example, in quaternion analogues of separable or coseparable
models, the core is itself a submatrix of the data. This may lead to classes of selection problems
that are easier than the general maximum-volume problem, and where exact or stable recovery
can be proved under geometric assumptions. Finally, it would be interesting to extend the present
framework beyond classical quaternions, for instance to matrices over split quaternions~\cite{ablamowicz2020split} and Clifford algebras~\cite{cao2022clifford}, which naturally arise in signal processing and geometric computing.

\printbibliography

\appendix

\section{Quaternion Woodbury and rank-one update formulas}
\label{app:woodbury-updates}

We include the main inverse-update identities used in the algorithms of Section~\ref{sec:algorithms}.
They are standard in the real and complex cases, but in the quaternion setting the order of
multiplication has to be kept fixed. This appendix justifies the formulas stated in
Section~\ref{sec:algorithms}, in particular the row and column inverse updates used by
Algorithm~\ref{alg:q_greedy_maxvol}.

\begin{proposition}[Woodbury identity over \(\mathbb H\)]
Let \(A\in\mathbb H^{n\times n}\) be invertible, and let
\(U\in\mathbb H^{n\times k}\), \(C\in\mathbb H^{k\times k}\), and
\(V\in\mathbb H^{k\times n}\). Assume that \(C\) and
\[
    S=C^{-1}+VA^{-1}U
\]
are invertible. Then
\[
    (A+UCV)^{-1}
    =
    A^{-1}-A^{-1}US^{-1}VA^{-1}.
\]
\end{proposition}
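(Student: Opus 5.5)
The plan is to verify the identity directly: multiply the candidate inverse by \(A+UCV\) and show the product is the identity, using only associativity and distributivity of matrix multiplication over \(\mathbb H\). We never commute factors. Write \(W=A^{-1}-A^{-1}US^{-1}VA^{-1}\) and compute \((A+UCV)W\) first.

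Expanding, and keeping every factor in its original order,
\[
(A+UCV)W = I + UCVA^{-1} - US^{-1}VA^{-1} - UCVA^{-1}US^{-1}VA^{-1}.
\]
The key step is to group the last three terms as \(U\,[\,C - S^{-1} - CVA^{-1}US^{-1}\,]\,VA^{-1}\). This is legitimate because each term begins with \(U\) on the left and ends with \(VA^{-1}\) on the right. Inside the bracket we write \(C=CSS^{-1}\), which gives
\[
CSS^{-1}-S^{-1}-CVA^{-1}US^{-1}=(CS-I-CVA^{-1}U)S^{-1}.
\]
Since \(CS=C(C^{-1}+VA^{-1}U)=I+CVA^{-1}U\), the bracket vanishes. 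Hence \((A+UCV)W=I\).

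By the symmetric computation, \(W(A+UCV)\) expands to \(I+A^{-1}UCV-A^{-1}US^{-1}V-A^{-1}US^{-1}VA^{-1}UCV\). We group this as \(A^{-1}U[\,C-S^{-1}-S^{-1}VA^{-1}UC\,]V\) and use \(SC=I+VA^{-1}UC\), written as \(C=S^{-1}SC\), to conclude \(W(A+UCV)=I\).

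The main subtlety is that over \(\mathbb H\) a one-sided inverse of a square matrix is not automatically known to be two-sided without an argument. This is why we verify both products explicitly rather than appealing to a dimension argument. Alternatively, we can invoke Lemma~\ref{lem:inverse_embedding}: once \(A+UCV\) has a right inverse, \(\Phi(A+UCV)\) has a right inverse, so it is invertible, and uniqueness then gives equality with \(W\). As a consequence, \(A+UCV\) is invertible and its inverse is \(W\).

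Finally, we note the specialization needed for \eqref{eq:quat_SM}. Take \(k=1\), \(C=1\), \(U=u\) and \(V=v^*\). Then \(S=1+v^*A^{-1}u\) is a nonzero quaternion scalar, and its inverse sits in the middle position exactly as stated.
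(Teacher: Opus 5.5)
Your proposal is correct and follows essentially the same route as the paper. Both arguments expand $(A+UCV)W$ and $W(A+UCV)$ while keeping every factor in order, and factor the residual terms as $U[\,C-S^{-1}-CVA^{-1}US^{-1}\,]VA^{-1}$ and $A^{-1}U[\,C-S^{-1}-S^{-1}VA^{-1}UC\,]V$. Each bracket is then shown to vanish using $CS=I+CVA^{-1}U$ and $SC=I+VA^{-1}UC$. Your extra remark that one could instead conclude via the complex-embedding inverse lemma is valid, but it is not needed, since you verify both products directly, as the paper does.
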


\begin{proof}
Let
\[
    X=A^{-1}-A^{-1}US^{-1}VA^{-1}.
\]
We prove both inverse identities. First,
\[
\begin{aligned}
(A+UCV)X
&=I-US^{-1}VA^{-1}+UCVA^{-1}
  -UCVA^{-1}US^{-1}VA^{-1}       \\
&=I+U\left(C-S^{-1}-CVA^{-1}US^{-1}\right)VA^{-1}.
\end{aligned}
\]
Since
\[
    S=C^{-1}+VA^{-1}U,
\]
we have
\[
    CS=I+CVA^{-1}U,
\]
and therefore
\[
    CVA^{-1}US^{-1}=(CS-I)S^{-1}=C-S^{-1}.
\]
Thus \((A+UCV)X=I\).

Similarly,
\[
\begin{aligned}
X(A+UCV)
&=I-A^{-1}US^{-1}V+A^{-1}UCV
  -A^{-1}US^{-1}VA^{-1}UCV       \\
&=I+A^{-1}U\left(C-S^{-1}-S^{-1}VA^{-1}UC\right)V.
\end{aligned}
\]
Since
\[
    SC=I+VA^{-1}UC,
\]
multiplying on the left by \(S^{-1}\) gives
\[
    C=S^{-1}+S^{-1}VA^{-1}UC.
\]
Hence \(X(A+UCV)=I\). Thus \(X=(A+UCV)^{-1}\).
\end{proof}

Taking \(C=1\), \(U=u\), and \(V=v^*\), we obtain the quaternion
Sherman--Morrison formula
\[
    (A+uv^*)^{-1}
    =
    A^{-1}-A^{-1}u(1+v^*A^{-1}u)^{-1}v^*A^{-1}.
\]

If the \(q\)th column of \(A\) is replaced by a new column \(b\), then
\[
    A_{\rm new}=A+(b-a_q)e_q^*,
\]
where \(a_q\) is the old \(q\)th column. Hence
\[
    A_{\rm new}^{-1}
    =
    A^{-1}
    -
    A^{-1}(b-a_q)
    \left(1+e_q^*A^{-1}(b-a_q)\right)^{-1}
    e_q^*A^{-1}.
\]
If the \(q\)th row of \(A\) is replaced by a new row \(b^*\), then
\[
    A_{\rm new}=A+e_q(b^*-a_q^*),
\]
and
\[
    A_{\rm new}^{-1}
    =
    A^{-1}
    -
    A^{-1}e_q
    \left(1+(b^*-a_q^*)A^{-1}e_q\right)^{-1}
    (b^*-a_q^*)A^{-1}.
\]
These are the update formulas used in Algorithm~\ref{alg:q_greedy_maxvol}.

\section{Other quaternion MaxVol-type adaptations}
\label{app:other_algorithms}

This appendix briefly describes two additional algorithms that were adapted to the quaternion setting during the project. They are not used as main contributions in the paper because they did not improve over the simpler methods in the current experiments. Still, they are useful to document, since they may become relevant after further implementation work.

\subsection{Quaternion GMVA}

The alternating $h$-greedy maximal-volume algorithm tests several candidate swaps in each row or column phase. In the quaternion version, candidate quality is evaluated with the quaternion volume. The procedure is close to Greedy MaxVol, except that several candidate entries are considered before accepting a swap.

\begin{algorithm}[ht!]
\caption{Quaternion alternating $h$-greedy MaxVol}
\label{alg:q_gmva_appendix}
\begin{algorithmic}[1]
\Require $A\in\Hq^{m\times n}$, core size $k$, tolerance $\eps$, number of candidates $h$
\Ensure row indices $I$, column indices $J$, core $B=A[I,J]$
\State Initialize $I,J$ and set $B=A[I,J]$.
\For{each sweep}
    \State Compute a pseudoinverse or inverse of $B$.
    \State Form $R=A[:,J]B^\pinv$ and set $v_0=\vol(B)$.
    \For{$t=1,\ldots,h$}
        \State Choose the largest admissible entry $R_{pq}$ not already excluded.
        \State Form the trial core $B_{\rm try}$ obtained by replacing the $q$th row by row $p$.
        \If{$\vol(B_{\rm try})>v_0$}
            \State Accept the row swap and update $B$.
            \State \textbf{break}
        \EndIf
    \EndFor
    \State Repeat the same procedure for columns using $B^\pinv A[I,:]$.
\EndFor
\State \Return $I,J,B$
\end{algorithmic}
\end{algorithm}

In our tests, increasing $h$ did not provide a clear improvement. The selected cores were often the same as for $h=1$, while the method remained more expensive because it recomputed the core pseudoinverse more often.

\subsection{Adaptive quaternion Block DEIM}

Adaptive Block DEIM first computes a truncated QSVD and then selects row and column indices from the basis matrices. The quaternion version uses quaternion residualization and an internal one-sided quaternion MaxVol routine for block selection.

\begin{algorithm}[ht!]
\caption{Adaptive quaternion Block DEIM}
\label{alg:q_block_deim_appendix}
\begin{algorithmic}[1]
\Require $A\in\Hq^{m\times n}$, target rank $k$, block size $b$, parameter $\rho$, tolerance $\eps$
\Ensure row indices $I$, column indices $J$
\State Compute a truncated QSVD $A\approx U\Sigma V^*$ with $U\in\Hq^{m\times k}$ and $V\in\Hq^{n\times k}$.
\State Apply the adaptive selection routine to $U$ to obtain row indices $I$.
\State Apply the adaptive selection routine to $V$ to obtain column indices $J$.
\State Return $I,J$.
\end{algorithmic}
\end{algorithm}

The method is sensitive to the block size. In the current implementation it can also become expensive for large matrices because it relies on a QSVD and repeated internal MaxVol calls. For this reason, it is kept as a possible future direction rather than as a main method in the paper.

\end{document}